\subjclass[2010]{12F10; 16T05}
\keywords{Hopf--Galois structures; Field extensions;	Groups of squarefree order.}
\thanks{For the purpose of open access, the author has applied a CC BY public copyright licence to any Author Accepted Manuscript version arising.
\newline
\indent Data Access Statement: Data sharing is not applicable to this article as no datasets were generated or analysed in this research.
\newline
\indent The author wishes to thank Nigel Byott for comments and suggestions on earlier versions of this paper, as well as for much fruitful discussion surrounding the ideas presented in this work.}
\title[Parallel HGS]{Hopf--Galois structures on parallel extensions}
\author{Andrew Darlington}
\date{\today}
\address{Department of Mathematics and Statistics, Faculty of Environment, Science and Economy, University of Exeter, Exeter EX4 QFU. UK.}
\email{ad788@exeter.ac.uk}
\def\bign#1{\mathclose{\hbox{$\left#1\vbox to8.5\p@{}\right.\n@space$}}\mathopen{}}
\def\Bign#1{\mathclose{\hbox{$\left#1\vbox to11.5\p@{}\right.\n@space$}}\mathopen{}}
\newtheorem{theorem}{Theorem}[section]
\newtheorem{proposition}[theorem]{Proposition}
\newtheorem{lemma}[theorem]{Lemma}
\newtheorem{conjecture}[theorem]{Conjecture}
\newtheorem{corollary}[theorem]{Corollary}
\theoremstyle{definition}
\newtheorem{remark}[theorem]{Remark}
\newcommand{\e}{\textbf{e}}
\newcommand{\Gal}{\mathrm{Gal}}
\newcommand{\Hol}{\mathrm{Hol}}
\newcommand{\Aut}{\mathrm{Aut}}
\begin{document}
	\bibliographystyle{amsalpha}
	
\begin{abstract}
    Let $L/K$ be a finite separable extension of fields of degree $n$, and let $E/K$ be its Galois closure. Greither and Pareigis showed how to find all Hopf--Galois structures on $L/K$. We will call a subextension $L'/K$ of $E/K$ \textit{parallel} to $L/K$ if $[L':K]=n$.

    In this paper, we investigate the relationship between the Hopf--Galois structures on an extension $L/K$ and those on the related parallel extensions. We give an example of a transitive subgroup corresponding to an extension admitting a Hopf--Galois structure but that has a parallel extension admitting no Hopf--Galois structures. We show that once one has such a situation, it can be extended into an infinite family of transitive subgroups admitting this phenomenon. We also investigate this fully in the case of extensions of degree $pq$ with $p,q$ distinct odd primes, and show that there is no example of such an extension admitting the phenomenon. 
\end{abstract}
\maketitle
	
\section{Background and Introduction}\label{intro}
Hopf--Galois structures were discovered by Chase and Sweedler in \cite{CS69} in a theory which describes an analogue of the Galois correspondence for commutative rings, and in particular, for non-Galois field extensions. Using Galois descent, Greither and Pareigis in \cite{GP87} then showed that the problem of finding Hopf--Galois structures on separable (but not necessarily normal) field extensions can be approached entirely group theoretically. Let $L/K$ be a separable extension of degree $n$ with Galois closure $E$. One can form the Galois groups $G:=\Gal(E/K)$ and $G':=\Gal(E/L)$. The result of Greither and Pareigis shows that a $K$-Hopf algebra $H$ giving a Hopf--Galois structure on $L/K$ corresponds to a subgroup $N \leq \text{Perm}(G/G')$ (the abstract isomorphism class of $N$ is known as the `type' of the Hopf--Galois structure) which is regular on the coset space $G/G'$ and normalised by the image, $\lambda(G)$, of the left translation map $\lambda:G \rightarrow \text{Perm}(G/G'), \lambda(g)(hG') \mapsto ghG'$. More precisely, $H=E[N]^G$, the sub-Hopf algebra fixed under the action of $G$. Here, $G$ acts on $E$ by field automorphisms, and on $N$ by conjugation via $\lambda$, that is for $g \in G, \eta \in N$, we have $g \cdot \eta := \lambda(g)\eta\lambda(g)^{-1}$. Greither--Pareigis theory also tells us how $H$ acts on $L/K$, but we do not give that here.

In 1996, Byott in \cite{Byo96} gave a reformulation of this situation by effectively reversing the roles played by $N$ and $G$. Now, given a group $N$ of order, say, $n$, one may construct the \textit{holomorph}, $\Hol(N)$, of $N$. Abusing notation, given the left translation map $\lambda:N \to \mathrm{Perm}(N)$ given by $\lambda(\eta)(\mu):=\eta\mu$ for all $\eta,\mu \in N$, the holomorph of $N$ is defined as the normaliser of $\lambda(N)$ in $\mathrm{Perm}(N)$, and is thus viewed as a permutation group. It can then be shown that
\[\Hol(N) \cong N \rtimes \Aut(N),\]
with multiplication given by
\[[\eta,\alpha][\mu,\beta]:=[\eta\alpha(\mu),\alpha\beta] \;\; \forall \eta,\mu \in N, \forall \alpha,\beta \in \Aut(N).\]
This is the viewpoint we will take for the rest of this paper unless specified otherwise. There is then a natural action of $\Hol(N)$ on $N$ given by
\[[\eta,\alpha]\cdot \mu:=\eta\alpha(\mu).\]
A subgroup $M \leq \Hol(N)$ is then called \textit{transitive} if its action on $N$ is transitive. Given a separable extension $L/K$ with $G,G'$ as above, Byott showed that $N$ is a regular subgroup of $\mathrm{Perm}(G/G')$ normalised by $\lambda(G)$ (with $\lambda$ as initially defined) if and only if there is a transitive subgroup $M \leq \Hol(N)$ and an isomorphism $\phi:G \to M$ with $\phi(G')=M':=\mathrm{Stab}_M(1_N)$. In this setting, we will say that $(G,G')$ is isomorphic to $(M,M')$. Therefore it is possible to hunt for Hopf--Galois structures by looking for transitive subgroups of $\Hol(N)$. That is, given a transitive subgroup $M \leq \Hol(N)$ for some group $N$ of order $n$, any separable extension $L/K$ of degree $n$ such that $(G,G')$ is isomorphic to $(M,M')$ will admit a Hopf--Galois structure of type $N$. Note here that if we start from the holomorph, there is no inverse Galois problem as the base field of the corresponding extension(s) is not assumed. In the case that there are transitive subgroups $M_1 \leq \Hol(N_1)$ and $M_2 \leq \Hol(N_2)$ such that there is an isomorphism $\phi:M_1 \to M_2$ with $\phi(M_1')=M_2'$, then we will say that $(M_1,M_1')$ and $(M_2,M_2')$ are \textit{isomorphic as permutation groups}.

Now let $n$ be a positive integer and suppose we fix a separable field extension $L/K$ of degree $n$ with Galois closure $E$. Then Galois theory tells us that there may be other degree $n$ subextensions $L'/K$ of $E$ (see Figure \ref{parallel}). We will call these related extensions \textit{parallel extensions} with respect to $L/K$. It can then be asked how the extension $L'/K$ behaves relative to $L/K$ in terms of the Hopf--Galois structures that it admits. Indeed, if $L/K$ is Galois, then $E=L=L'$ and so there are no parallel extensions to $L/K$. If $L/K$ is not Galois, then interesting phenomena may occur. For example, let $L/K$ be a degree $4$ extension with $\Gal(E/K) \cong D_8$, the dihedral group of order $8$. Note that $D_8$ has four non-normal subgroups of order $2$ (one of which corresponds to $L/K$), and one normal subgroup $H$ of order $2$. Thus there is a subextension $L'/K$ of $E/K$, of degree $4$ which is Galois (see Figure \ref{nontriv_example}). In this case, the extension $L'/K$ has smaller Galois closure (i.e.~$L'$ itself) than $L/K$, and therefore admits Hopf--Galois structures with smaller permutation group. This example also highlights that the relation of extensions being parallel is not symmetric. In particular, $L'/K$ is parallel to $L/K$, but not vice versa, as $L$ is not contained in the Galois closure of $L'/K$.
\begin{center}
	\begin{figure}[H]
		\begin{tikzcd}
			E \arrow[d,dash] \arrow[dr,dash] & \\
			L \arrow[d,dash,"n"'] & L' \arrow[dl,dash,"n"] \\
			K &  
		\end{tikzcd}
	\caption{A parallel extension, $L'/K$, to $L/K$.}
	\label{parallel}
	\end{figure}	
\end{center}
\begin{center}
	\begin{figure}[H] \label{dihedral}
		\begin{tikzcd}
			E \arrow[d,dash,"2"'] \arrow[dd,dash,"D_8"', bend right=45] \arrow[dr,dash,"2"] & \\
			L \arrow[d,dash,"4"'] & L' \arrow[dl,dash,"4"] \arrow[dl,dash,"D_8/H", bend left = 45] \\
			K &  
		\end{tikzcd}
		\caption{Example showing a Galois parallel extension.}
		\label{nontriv_example}
	\end{figure}	
\end{center}
Using this example as motivation, it is interesting and fruitful to study such phenomena on large classes of extensions where the Hopf--Galois structures have already been determined.

To this end, suppose that we have a classification of Hopf--Galois structures on separable extensions of degree, say, $n$. Further suppose that $L/K$ is one such extension which admits a Hopf--Galois structure of type $N$, for some group $N$ of order $n$. Recall that $G:=\Gal(E/K)$ where $E/K$ is the Galois closure of $L/K$. Thus an index $n$ subgroup $H$ of $G$ corresponds to a degree $n$ sub-extension $L'/K$ of $E/K$, that is, a parallel extension to $L/K$. The Hopf--Galois structures admitted by $L'/K$ then depend on its Galois closure $E'$, which, as highlighted in Figure \ref{nontriv_example}, may be strictly smaller than $E$. This may be computed by first taking the largest normal subgroup $C$ of $G$ contained in $H$, that is  $C=\mathrm{Core}_G(H):=\bigcap_{g \in G}gHg^{-1}$. Then $E'=E^C$ satisfies $\Gal(E'/K)\cong G/C$ and $\Gal(E'/L')\cong H/C$. In other words, asking whether $L'/K$ admits a Hopf--Galois structure of type $M$ is the same as asking whether there is a transitive subgroup $J \leq \Hol(M)$ and an isomorphism $\phi: G/C \to J$ such that $\phi(H/C)=J':=\mathrm{Stab}_J(1_M)$.

In this paper, we start by giving some necessary definitions and present some results relating to parallel extensions to an extension of squarefree degree. In Section \ref{par_pq}, we specialise to the case of extensions of degree $pq$ where $p$ and $q$ are distinct odd primes. In the final section, we give some examples of permutation groups (of non-squarefree degree) which correspond to separable extensions admitting a Hopf--Galois structure, but having a parallel extension admitting no Hopf--Galois structure. We also give a way of constructing an infinite family of examples. 

\section{Preliminaries}\label{par_prelim}
In this section, we give the results needed for the rest of the paper.

Let $n$ be a positive integer and $N$ any group of order $n$. We denote an element of $\Hol(N)$ by $[\eta,\alpha]$ where $\eta \in N$ and $\alpha \in \Aut(N)$. We will write $\eta$ for $[\eta,\text{id}_N]$ and $\alpha$ for $[1_N,\alpha]$ to simplify notation. We also write $\mathbb{Z}_m$ for the ring of integers modulo $m$.
\begin{remark}
    Let $G$ be a transitive subgroup of $\Hol(N)$ with $|N|=n$ and let $L/K$ be a separable extension corresponding to $G$ as in Section \ref{intro}. Let $H_1,H_2$ be two index $n$ subgroups of $G$, let $C_1$ and $C_2$ denote the normal cores of $H_1$ and $H_2$ respectively in $G$, and let $L_1/K$ and $L_2/K$ be the respective corresponding extensions. We note that the relationship between $H_1$ and $H_2$ greatly determines how $L_1/K$ and $L_2/K$ behave with respect to each other from the perspective of Hopf--Galois theory:

    If, for example, $H_1$ and $H_2$ are in the same orbit under $\Aut(G)$, then given a $\phi \in \Aut(G)$ such that $\phi(H_1)=H_2$, it induces an isomorphism $\overline{\phi}:G/C_1 \to G/C_2$ such that $\overline{\phi}(H_1/C_1)=H_2/C_2$. If there is then a transitive subgroup $M \leq \Hol(N')$ for some group $N'$ of order at most $n$ such that $(G/C_1,H_1/C_1)$ is isomorphic to $(M,M')$, we thus also have that $(G/C_2,H_2/C_2)$ is isomorphic to $(M,M')$. We note that this makes sense as we may view, for example, $G/C_1 \overset{\psi}{\cong} \Gal(E^{C_1}/K)$ with $\psi(H_1/C_1)=\Gal(E^{C_1}/L_1)$, where $E^{C_1}$ is the Galois closure of $L_1/K$. We therefore have that $L_1/K$ admits a Hopf--Galois structure of type $N'$ if and only if $L_2/K$ admits a Hopf--Galois structure of type $N'$. We note that the situation in which $H_2$ and $H_2$ are conjugate in $G$ (in which case $L_1/K$ and $L_2/K$ are conjugate extensions) is a special case of this behaviour; in particular, we get that $C_1=C_2$ and not just $C_1 \cong C_2$.

    It is also possible for $H_1$ and $H_2$ to be isomorphic as abstract groups, but not be in the same $\Aut(G)$-orbit. In this case, there are a number of possibilities for $C_1$ and $C_2$ and hence $E^{C_1}/K$ and $E^{C_2}/K$. In particular, we may have that $[E^{C_1}:K] \neq [E^{C_2}:K]$, and in the extreme case as demonstrated by Figure \ref{dihedral}, it could be that $E^{C_1}/K$ is Galois and $E^{C_2}/K$ is not.

    A particularly interesting scenario is where $E^{C_1}=E^{C_2}$ with $L_1/K$ admitting Hopf--Galois structures, but $L_2/K$ not admitting any. With respect to the extension $L/K$ we started off with, say $L_1:=L$ and so $H_1=G'$, this would correspond to $H_2$ having trivial core, possibly even $H_2 \cong G'$, but $L_2/K$ behaving very differently from $L/K$. In particular, $(G,H_2)$ is not isomorphic to any transitive subgroup $(M,M')$ of some $\Hol(N')$.
\end{remark}
In light of this remark, we note it is therefore of interest to sort the index $n$ subgroups into conjugacy classes, orbits under $\Aut(G)$, and abstract isomorphism classes.

Let $\pi$ be any set of primes. We recall that a $\pi$-subgroup of a group $G$ is a subgroup $H \leq G$ such that the prime divisors of $|H|$ all lie in $\pi$. Further, a \textit{Hall $\pi$-subgroup} of $G$ is a $\pi$-subgroup $H$ whose order is coprime with $|G:H|$. We also recall Hall's Theorem below, which will be referred to in the rest of this paper.
\begin{theorem}[Hall, 1928. See 9.1.7 of \cite{Rob96}]
    Let $G$ be a finite solvable group and $\pi$ any set of primes dividing $|G|$. Then every $\pi$-subgroup of $G$ is contained in a Hall $\pi$-subgroup of $G$. Further, any two Hall $\pi$-subgroups of $G$ are conjugate.
\end{theorem}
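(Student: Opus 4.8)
The plan is to establish both the existence assertion and the conjugacy assertion simultaneously by induction on $|G|$, exploiting the one structural feature of solvable groups that drives everything: a minimal normal subgroup $M$ of a finite solvable group $G$ is an elementary abelian $p$-group for some prime $p$. Throughout I would write $|G|_\pi$ for the $\pi$-part of $|G|$, so that a Hall $\pi$-subgroup is exactly a subgroup of order $|G|_\pi$. Having fixed such an $M$ and passed to the smaller solvable quotient $\overline{G}:=G/M$, the whole argument bifurcates according to whether $p\in\pi$ or $p\notin\pi$, and in the second case one is ultimately forced onto the Schur--Zassenhaus theorem applied to the normal subgroup $M$.

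For existence, the inductive hypothesis supplies a Hall $\pi$-subgroup $\overline{H}=H/M$ of $\overline{G}$. If $p\in\pi$, I would check directly that $H$ itself is a Hall $\pi$-subgroup of $G$: the extra factor $|M|$ is a $\pi$-number, while $|G:H|=|\overline{G}:\overline{H}|$ is a $\pi'$-number and hence coprime to $|H|$. If instead $p\notin\pi$, then $|G|_\pi=|\overline{G}|_\pi=|\overline{H}|$, so whenever $H$ is a \emph{proper} subgroup the induction applied to $H$ yields a subgroup of order $|H|_\pi=|G|_\pi$, which is the desired Hall $\pi$-subgroup of $G$. The only case not covered is $H=G$, i.e.\ $\overline{G}$ is a $\pi$-group; then $M$ is a normal Hall $p$-subgroup of $G$ and Schur--Zassenhaus produces a complement of order $|\overline{G}|=|G|_\pi$.

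For conjugacy, let $H_1,H_2$ be two Hall $\pi$-subgroups and reduce modulo $M$ again. When $p\in\pi$ one shows $M\leq H_i$ for each $i$ by an order count: $|H_iM|=|H_i|\,|M:M\cap H_i|$ is $|H_i|$ times a power of $p$, and since $|H_i|=|G|_\pi$ is already the full $\pi$-part, that $p$-power must be trivial. Then $H_1/M$ and $H_2/M$ are conjugate in $\overline{G}$ by induction, and any lift of the conjugating element conjugates $H_1$ to $H_2$. When $p\notin\pi$, the images $H_iM/M$ are Hall $\pi$-subgroups of $\overline{G}$, hence conjugate, so after replacing $H_2$ by a conjugate I may assume $H_1M=H_2M=:K$; now $H_1,H_2$ are both complements of $M$ in $K$ (note $H_i\cap M=1$ since $|H_i|$ is a $\pi$-number and $|M|$ is a $p$-power with $p\notin\pi$). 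If $K<G$ they are conjugate in $K$ by induction, and if $K=G$ then $M$ is a normal Hall subgroup and the conjugacy part of Schur--Zassenhaus (valid here because the kernel $M$ is abelian) finishes the argument. The containment statement falls out of the same induction: a $\pi$-subgroup $P$ has image inside a Hall $\pi$-subgroup of $\overline{G}$, which pulls back to the subgroup $H$ above so that $P\leq H$; one then recurses into $H$ when $H<G$, and in the terminal case appeals once more to Schur--Zassenhaus to place $P$ inside a complement of $M$.

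The main obstacle throughout is precisely the case $p\notin\pi$ in which the relevant quotient has already become a $\pi$-group, since there the inductive hypothesis offers no further reduction and Schur--Zassenhaus must be invoked---for existence to produce a complement of $M$, and for conjugacy to show that any two such complements are $G$-conjugate. Everything else is bookkeeping, but one must track the orders carefully at each step to confirm that the subgroups produced genuinely have order $|G|_\pi$ and that the intersections with $M$ are trivial, so that the objects constructed really are Hall $\pi$-subgroups and complements as claimed.
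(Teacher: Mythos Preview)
The paper does not actually prove this theorem: it is stated with a citation to Robinson's textbook and then used as a black box throughout Sections~\ref{par_prelim} and~\ref{par_pq}. Your sketch is the standard textbook proof---induction on $|G|$ via a minimal normal subgroup $M$, splitting on whether the prime $p$ with $M$ elementary abelian of exponent $p$ lies in $\pi$, and invoking Schur--Zassenhaus in the terminal case $p\notin\pi$, $H=G$---and it is correct. This is essentially the argument one finds in the cited reference, so there is nothing to contrast.
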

In light of this, for a positive integer $n$, we define
\begin{align*}
    &\pi(n):=\{\text{primes dividing }n\},\\
    &\pi'(n):=\{\text{primes not dividing }n\}.
\end{align*}
We also denote by $\#\mathrm{Cl}_n(G)$ the number of conjugacy classes that the index $n$ subgroups of $G$ fall into.

For the remainder of this section, we let $n$ be any squarefree integer unless stated otherwise, and recall that $N$ is a group of order $n$.
\begin{remark}
    If $N$ is a group of squarefree order, then by Lemma 3.2 of \cite{AB18}, we have that $\Hol(N)$ is solvable. So Hall's theorem applies to any subgroup of $\Hol(N)$.
\end{remark}
We now begin by stating and proving some results for transitive subgroups of $\Hol(N)$.
\begin{lemma}\label{unique_Hall}
    There is a unique Hall $\pi(n)$-subgroup, $Q$, of $\Hol(N)$. Furthermore, let $G$ be any subgroup of $\Hol(N)$. Then $G$ may be written as $G=U \rtimes V$, where $U=G \cap Q$ is the unique Hall $\pi(n)$-subgroup of $G$, and $V$ is a Hall $\pi'(n)$-subgroup of $G$.
\end{lemma}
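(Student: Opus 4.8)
The plan is to combine Hall's Theorem with a reduction to the quotient $\Aut(N)=\Hol(N)/N$. Since $N$ has squarefree order, $\Hol(N)$ is solvable by the preceding remark, so Hall's Theorem guarantees that Hall $\pi(n)$-subgroups of $\Hol(N)$ exist and are mutually conjugate; consequently such a subgroup is \emph{unique} precisely when it is \emph{normal}. Thus it suffices to produce a normal Hall $\pi(n)$-subgroup. Here I would exploit that $N\trianglelefteq\Hol(N)$ is itself a $\pi(n)$-group (as $|N|=n$). If I can show that $\Aut(N)$ possesses a normal Hall $\pi(n)$-subgroup $\bar Q$, then its preimage $Q$ under the projection $\Hol(N)\to\Aut(N)$ is normal, contains $N$, satisfies $Q/N\cong\bar Q$, and hence is a $\pi(n)$-group (since both $N$ and $Q/N$ are), with $\pi'(n)$-quotient $\Hol(N)/Q\cong\Aut(N)/\bar Q$. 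This exhibits $Q$ as a normal, and therefore unique, Hall $\pi(n)$-subgroup of $\Hol(N)$. The whole problem is thereby reduced to the single claim that $\Aut(N)$ has a normal Hall $\pi(n)$-subgroup.

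To handle $\Aut(N)$, I would first build a \emph{characteristic} series $1=M_0\triangleleft M_1\triangleleft\cdots\triangleleft M_t=N$ whose factors are cyclic of prime order. Since $N$ has squarefree order it is metabelian, so refining the derived series $N\triangleright N'\triangleright 1$ by the (characteristic) primary components of its abelian factors produces such a series; crucially, squarefreeness forces the factors $M_i/M_{i-1}\cong\mathbb{Z}_{p_i}$ to have \emph{pairwise distinct} primes. Because each $M_i$ is characteristic, every $\alpha\in\Aut(N)$ induces an automorphism on each factor, giving a homomorphism
\[
\rho\colon \Aut(N)\longrightarrow \prod_{i=1}^{t}\Aut(M_i/M_{i-1})=\prod_{i=1}^{t}\mathbb{Z}_{p_i-1},
\]
whose target is \emph{abelian}. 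An abelian group has a unique (hence normal) Hall $\pi(n)$-subgroup $\bar T$, and I would take $\bar Q:=\rho^{-1}(\bar T)$. Provided the kernel $S:=\ker\rho$ is a $\pi(n)$-group, $\bar Q$ is normal in $\Aut(N)$ (being the preimage of a normal subgroup), is a $\pi(n)$-group (since both $S$ and $\bar Q/S\cong\bar T$ are), and has $\pi'(n)$-quotient $\Aut(N)/\bar Q\cong(\operatorname{im}\rho)/\bar T$; that is, $\bar Q$ is the desired normal Hall $\pi(n)$-subgroup of $\Aut(N)$.

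The main obstacle is exactly the claim that $S=\ker\rho$ --- the group of automorphisms acting trivially on every factor of the series --- is a $\pi(n)$-group. This is precisely where squarefreeness (equivalently, coprimality of the factor orders) is indispensable, and I would settle it by a coprime action argument: if $\alpha\in S$ has order coprime to $n=|N|$, then $\alpha$ stabilises the $\alpha$-invariant normal series $\{M_i\}$ and acts trivially on each factor, so applying the coprime fixed-point identity $C_{N/M}(\alpha)=C_N(\alpha)M/M$ inductively up the series forces $C_N(\alpha)=N$, i.e.\ $\alpha=\mathrm{id}$. Hence $S$ has no nontrivial element of order prime to $n$, so $S$ is a $\pi(n)$-group, as required. (One could alternatively invoke an explicit description of $\Aut(N)$ for squarefree $N$, but the coprime-action route avoids case analysis.)

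Finally, for the statement about an arbitrary subgroup $G\le\Hol(N)$, I would argue as follows. With $Q$ the normal Hall $\pi(n)$-subgroup just constructed, set $U:=G\cap Q$. Then $U\trianglelefteq G$ (intersection with a normal subgroup), $U$ is a $\pi(n)$-group, and $G/U\cong GQ/Q\le\Hol(N)/Q$ is a $\pi'(n)$-group; comparing orders shows $U$ is a Hall $\pi(n)$-subgroup of $G$, and its normality makes it the unique one. Since $G$ is solvable, Hall's Theorem supplies a Hall $\pi'(n)$-subgroup $V\le G$; as $\gcd(|U|,|V|)=1$ with $|U|\,|V|=|G|$ we get $U\cap V=1$ and $UV=G$, and the normality of $U$ yields $G=U\rtimes V$, completing the proof.
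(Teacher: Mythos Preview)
Your proof is correct, but it follows a genuinely different route from the paper's own argument. The paper invokes the explicit structure of $\Aut(N)$ for squarefree $N$ from \cite{AB20}: writing $N=\langle\sigma,\tau\rangle$ as a metacyclic group, it exhibits concrete generators $\theta,\{\phi_s\}$ for $\Aut(N)=\langle\theta\rangle\rtimes\Phi$, splits the abelian factor $\Phi$ into its $\pi(n)$- and $\pi'(n)$-parts $\Phi_1\times\Phi_2$, and then simply sets $Q:=N\rtimes(\langle\theta\rangle\rtimes\Phi_1)$, checking directly that this is a normal Hall $\pi(n)$-subgroup with complement $\Phi_2$. For the second statement it uses Schur--Zassenhaus rather than Hall's theorem to produce the complement $V$, and observes in passing that $V$ lies in a conjugate of $\Phi_2$. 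By contrast, you avoid any explicit description of $\Aut(N)$: you pass to a characteristic series of $N$ with prime cyclic factors, map $\Aut(N)$ into the abelian product $\prod_i\mathbb{Z}_{p_i-1}$, and control the kernel by a coprime fixed-point argument. Your approach is more self-contained and conceptual, and would transplant to other settings without needing a structure theorem for $\Aut(N)$; the paper's approach, on the other hand, yields the explicit form of $Q$ and of the complement $\Phi_2$, and these concrete generators are exactly what the later sections (Tables~\ref{cyclic-trans-subgroups} and~\ref{metacyclic-trans-subs}, and the proofs in \S\ref{par_pq}) rely on. So both arguments are valid, but the paper's choice is dictated by downstream needs rather than by the lemma itself.
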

\begin{proof}
    If $N$ is a group of squarefree order $n$, then (see \cite{AB18}) for some $d,e$ with $n=ed$ and some $k$ with order $d$ modulo $e$, we have
    \[N\cong\langle \sigma,\tau \mid \sigma^e=\tau^d=1,\tau\sigma=\sigma^k\tau \rangle.\]
    Further, let $z:=\gcd(k-1,e)$ and consider the elements $\theta,\{\phi_s\}_{s \in \mathbb{Z}_e^{\times}}$ of $\Aut(N)$ given by
    \begin{align*}
        &\theta(\sigma)=\sigma,		&&\theta(\tau)=\sigma^z\tau,\\
        &\phi_s(\sigma)=\sigma^s,	&&\phi_s(\tau)=\tau.
    \end{align*}
    Then Lemma 4.1 of \cite{AB20} tells us that
    \[\Aut(N)=\langle\theta\rangle \rtimes \{\phi_s\}_{s \in \mathbb{Z}_e^{\times}}.\]
    Noting that $\Phi:=\{\phi_s\}_{s \in \mathbb{Z}_e^{\times}}$ is an abelian group, we further decompose $\Phi$ as $\Phi=\Phi_1\times \Phi_2$ such that $\Phi_1$ is the Hall $\pi(n)$-subgroup of $\Phi$ and $\Phi_2$ is the Hall $\pi'(n)$ subgroup of $\Phi$.

    Now, the subgroup $Q:=N \rtimes (\langle \theta \rangle \rtimes \Phi_1)$ is a normal (and therefore unique) Hall $\pi(n)$-subgroup of $\Hol(N)$, and has $\Phi_2$ as a complementary subgroup. Therefore, we may write $\Hol(N)=Q \rtimes \Phi_2$.

    Let $G$ be any subgroup of $\Hol(N)$. Denote by $U$ the unique maximal normal $\pi(n)$-subgroup of $G$. Clearly we have that $U \leq Q$ as any $\pi(n)$-subgroup of $G \leq \Hol(N)$ must be contained in $Q$. We also have that $G \cap Q \leq U$. This is because $G \cap Q$ is normal in $G$ and $U$ contains every normal $\pi(n)$-subgroup of $G$. Therefore we have that $U=G \cap Q$. It is clear then that $U$ has order coprime to its index in $G$, and therefore by the Schur-Zassenhaus Theorem, $G=U \rtimes V$, where $V$ is some $\pi'(n)$-subgroup of $G$. The Schur-Zassenhaus Theorem also tells us that $V$ is contained in a conjugate subgroup of $\Phi_2$ in $\Hol(N)$.
\end{proof}
Whenever we write $G=U \rtimes V$, we will now assume that $U=G \cap Q$ and $V$ is a Hall $\pi'(n)$-subgroup of $G$ which is a subgroup of a conjugate of $\Phi_2$.
\begin{corollary}\label{index_n_subgroups}
   Let $G=U \rtimes V \leq \Hol(N)$ be any subgroup. Then any index $n$ subgroup $H$ of $G$ can be written as $H=U' \rtimes V'$, where $U'$ is some index $n$ subgroup of $U$ and $V'$ is a subgroup conjugate to $V$ in $G$.
\end{corollary}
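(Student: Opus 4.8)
The plan is to apply Lemma~\ref{unique_Hall} not to $G$ but directly to the subgroup $H$, and then to compare the resulting decomposition of $H$ with that of $G$ by counting orders. Since $H \leq G \leq \Hol(N)$, Lemma~\ref{unique_Hall} applies verbatim to $H$ and yields a decomposition $H = U' \rtimes V'$, where $U' = H \cap Q$ is the unique (hence normal) Hall $\pi(n)$-subgroup of $H$ and $V'$ is a Hall $\pi'(n)$-subgroup of $H$. It therefore remains only to identify $U'$ with an index $n$ subgroup of $U$, and to show that $V'$ is conjugate to $V$ in $G$.

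For the first point, I would observe that $U' = H \cap Q = H \cap (G \cap Q) = H \cap U$, using $H \leq G$. The index computation is then driven by the single observation that $n = |G:H|$ is a $\pi(n)$-number (every prime dividing $n$ lies in $\pi(n)$ by definition). Writing $|G| = |U|\,|V|$ with $|U|$ a $\pi(n)$-number and $|V|$ a $\pi'(n)$-number, dividing by $n$ only affects the $\pi(n)$-part, so the $\pi'(n)$-part of $|H| = |G|/n$ equals $|V|$. Since $|H| = |U'|\,|V'|$, with $|U'|$ a $\pi(n)$-number and $|V'|$ a $\pi'(n)$-number, comparing $\pi'(n)$-parts forces $|V'| = |V|$ and hence $|U'| = |U|/n$; that is, $U'$ is an index $n$ subgroup of $U$.

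For the second point, the equality $|V'| = |V|$ shows that $V'$, being a $\pi'(n)$-subgroup of $G$ of the same order as the Hall $\pi'(n)$-subgroup $V$, is itself a Hall $\pi'(n)$-subgroup of $G$. As $\Hol(N)$ is solvable, so too is $G$, and Hall's Theorem then guarantees that the two Hall $\pi'(n)$-subgroups $V$ and $V'$ of $G$ are conjugate in $G$, completing the argument.

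I would expect the only genuine subtlety to lie in the bookkeeping of this order count: one must be careful that dividing $|G|$ by the $\pi(n)$-number $n$ leaves the $\pi'(n)$-part of the order untouched, since this is precisely what promotes $V'$ from a Hall $\pi'(n)$-subgroup of $H$ to a Hall $\pi'(n)$-subgroup of the full group $G$, and thereby makes Hall's conjugacy theorem applicable. Everything else is immediate from Lemma~\ref{unique_Hall}.
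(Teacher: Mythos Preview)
Your proposal is correct and follows essentially the same approach as the paper's proof, which simply says ``This is the same proof as that given for the form of $G$. However, we note that $|V'|=|V|$, and so $V$ and $V'$ must be conjugate in $G$.'' You have spelled out in detail precisely the order-counting argument (that $n$ is a $\pi(n)$-number, so passing to an index $n$ subgroup preserves the $\pi'(n)$-part of the order) and the appeal to Hall's conjugacy theorem that the paper leaves implicit.
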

\begin{proof}
    This is the same proof as that given for the form of $G$. However, we note that $|V'|=|V|$, and so $V$ and $V'$ must be conjugate in $G$.
\end{proof}
\begin{proposition}\label{conj_orbs}
   Let $G \leq \Hol(N)$ be any subgroup. Let
   \[H_1:=U_1 \rtimes V_1, \text{ and } H_2:=U_2 \rtimes V_2\]
   be index $n$ subgroups of $G$ as described in Corollary \ref{index_n_subgroups}. Then
   \begin{enumerate}[label=(\roman*)]
       \item The subgroups $H_1$ and $H_2$ are conjugate in $G$ if and only if $U_1$ and $U_2$ are conjugate in $G$.
       \item The subgroups $H_1$ and $H_2$ are in the same $\Aut(G)$-orbit if and only if $U_1$ and $U_2$ are in the same $\Aut(G)$-orbit.
       \item The subgroups $H_1$ and $H_2$ are isomorphic as abstract groups if and only if $U_1$ and $U_2$ are isomorphic as abstract groups. 
   \end{enumerate}
\end{proposition}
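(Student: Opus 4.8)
The plan is to exploit the fact that in each index $n$ subgroup $H_i$ the Hall $\pi(n)$-part $U_i$ is forced to be $H_i \cap Q$. Indeed, applying Lemma~\ref{unique_Hall} to $H_i \leq \Hol(N)$ shows that $U_i = H_i \cap Q$ is the \emph{unique} Hall $\pi(n)$-subgroup of $H_i$, hence characteristic in $H_i$; likewise $U = G \cap Q$ is characteristic in $G$. Two further facts will be used repeatedly: first, $Q$ is normal in $\Hol(N)$ (Lemma~\ref{unique_Hall}), so for any $g \in G$ we have $g U_i g^{-1} = g(H_i \cap Q)g^{-1} = (gH_ig^{-1})\cap Q$; second, by Corollary~\ref{index_n_subgroups} each $V_i$ is conjugate to $V$, so $|V_i| = |V|$ and therefore each $V_i$ is in fact a Hall $\pi'(n)$-subgroup of the whole solvable group $G$.

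For (i), the forward direction is immediate: if $g H_1 g^{-1} = H_2$ then intersecting with the normal subgroup $Q$ gives $gU_1g^{-1} = U_2$. For the converse, suppose $gU_1g^{-1}=U_2$; replacing $H_1$ by $gH_1g^{-1}$ (whose Hall $\pi(n)$-part is $U_2$) I may assume $U_1 = U_2 =: U'$. Both complements then normalise $U'$, so $H_1, H_2 \leq N_G(U')$, the normaliser of $U'$ in $G$. Since $|V_i| = |V|$ is the full $\pi'(n)$-part of $|G|$, and hence of $|N_G(U')|$, each $V_i$ is a Hall $\pi'(n)$-subgroup of the solvable group $N_G(U')$. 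Hall's Theorem then furnishes $h \in N_G(U')$ with $hV_1h^{-1} = V_2$, whence $hH_1h^{-1} = U' \rtimes V_2 = H_2$.

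Part (ii) follows formally from (i) together with the characteristic property. If $\psi \in \Aut(G)$ sends $H_1$ to $H_2$, then the restriction $\psi|_{H_1}$ carries the unique Hall $\pi(n)$-subgroup of $H_1$ onto that of $H_2$, i.e. $\psi(U_1) = U_2$. Conversely, if $\psi(U_1) = U_2$ for some $\psi \in \Aut(G)$, then (using $\psi(U)=U$) the subgroup $\psi(H_1)$ is an index $n$ subgroup whose Hall $\pi(n)$-part is $U_2$, so by (i) it is $G$-conjugate to $H_2$; composing $\psi$ with that inner automorphism exhibits $H_1$ and $H_2$ in a common $\Aut(G)$-orbit. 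The forward direction of (iii) is the same observation at the level of abstract groups: any isomorphism $H_1 \to H_2$ maps the unique Hall $\pi(n)$-subgroup $U_1$ onto $U_2$, so $U_1 \cong U_2$.

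The converse of (iii) is where the real work lies and is the step I expect to be the main obstacle. Given $U_1 \cong U_2$, I would first conjugate $H_2$ so that its complement coincides with that of $H_1$ (possible since all the $V_i$ are $G$-conjugate), reducing to $H_1 = U_1 \rtimes W$ and $H_2 = U_2 \rtimes W$ with a common complement $W$ acting on each $U_i$ by conjugation and with $\gcd(|W|,|U_i|)=1$. It then suffices to produce a $W$-equivariant isomorphism $U_1 \to U_2$, and this is the delicate point, since an arbitrary abstract isomorphism need not intertwine the two conjugation actions. The approach is to use the explicit description of $\Aut(N)$ and of $\Hol(N) = Q \rtimes \Phi_2$ from Lemma~\ref{unique_Hall}: the complement $W$ lies in a conjugate of the abelian group $\Phi_2$, and its conjugation action on the $\pi(n)$-group $U_i$ is, up to conjugacy, given by the canonical scaling maps $\phi_s$ (raising the relevant powers of $\sigma$ to a unit power $s$). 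Because the action is coprime it is semisimple, and the scalars by which $W$ acts are determined by $s \bmod p$ independently of which subgroup $U_i$ is considered; one can therefore choose the isomorphism $U_1 \to U_2$ to respect this structure, and it is then automatically $W$-equivariant. This yields $H_1 = U_1 \rtimes W \cong U_2 \rtimes W = H_2$, completing the proof.
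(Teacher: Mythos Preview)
Your treatment of parts (i), (ii), and the forward implication of (iii) is correct and essentially mirrors the paper's argument: you use normality of $Q$ to intersect, uniqueness of Hall $\pi(n)$-subgroups to push isomorphisms down to the $U_i$, and Hall's theorem inside $N_G(U')$ to realign the complements. The paper does exactly this.

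You are also right to flag the \emph{converse} of (iii) as the real obstacle; the paper's proof at that point is merely the sentence ``the argument for (ii) can be replicated for $\phi:H_1\to H_2$ now an isomorphism of abstract groups,'' which does not obviously supply an argument for going from $U_1\cong U_2$ to $H_1\cong H_2$. Your proposed fix --- conjugate to a common complement $W$ and then promote an abstract isomorphism $U_1\to U_2$ to a $W$-equivariant one using the scalar description of $\Phi_2$ --- has a genuine gap, and in fact the step cannot be completed in general. Concretely, take $p=7$, $q=3$, $N=C_{21}=\langle\sigma\rangle\times\langle\tau\rangle$ with $|\sigma|=7$, $|\tau|=3$, and define $\alpha,\beta\in\Aut(N)$ by $\alpha(\sigma)=\sigma^{2}$, $\alpha(\tau)=\tau$, $\beta(\sigma)=\sigma$, $\beta(\tau)=\tau^{-1}$. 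Set $G=N\rtimes\langle\alpha,\beta\rangle\leq\Hol(N)$, a transitive subgroup of order $126$. Then $H_1=\langle\alpha,\beta\rangle\cong C_6$ and $H_2=\langle\tau,\beta\rangle\cong S_3$ are both index $21$ subgroups of $G$ whose Hall $\{3,7\}$-parts are $U_1=\langle\alpha\rangle\cong C_3$ and $U_2=\langle\tau\rangle\cong C_3$; yet $H_1\not\cong H_2$. The failure is precisely that the common complement $W=\langle\beta\rangle$ centralises $U_1$ (since $\Aut(N)$ is abelian) but acts nontrivially on $U_2$, so no $W$-equivariant isomorphism $U_1\to U_2$ exists. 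Your heuristic that ``the scalars by which $W$ acts are determined by $s\bmod p$ independently of which subgroup $U_i$ is considered'' breaks down exactly here: $\beta$ acts by different scalars on different order-$3$ pieces of $Q$. Thus the converse of (iii), as stated, appears to be false, and neither your sketch nor the paper's one-line ``replicate'' can be made to work without an additional hypothesis.
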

\begin{proof}
\textit{(i)} Suppose first that there is a $g \in G$ such that $gH_1g^{-1}=H_2$. Then
\[gU_1g^{-1} \rtimes gV_1g^{-1}=g(U_1 \rtimes V_1)g^{-1}=U_2 \rtimes V_2.\]
If $gU_1g^{-1} \not\subseteq U_2$, then there is a $u \in U_1$ such that $gug^{-1} \in H_2\setminus U_2$. However, this contradicts the fact that $U_2$ is the unique Hall $\pi(n)$-subgroup of $H_2$. This is because $u$ and hence also $gug^{-1}$ have order a product of primes in $\pi(n)$. So we must have $gU_1g^{-1} = U_2$.

Now suppose that there is a $u \in G$ such that $uU_1u^{-1}=U_2$. We show that there is a $g \in G$ such that $g(U_1 \rtimes V_1)g^{-1} = U_2 \rtimes V_2$. We claim that there is a $u' \in \mathrm{Norm}_G(U_2)$ such that $u'V_2u'^{-1}=uV_1u^{-1}$. Given this, we see that
\[u'(U_2 \rtimes V_2)u'^{-1}=u'U_2u'^{-1} \rtimes u'V_2u'^{-1}=U_2 \rtimes uV_1u^{-1}=u(U_1 \rtimes V_1)u^{-1}.\]
Therefore $H_1$ and $H_2$ are conjugate in $G$ (that is $g:=u'^{-1}u)$. To show the existence of $u'$, we note that both $V_2$ and $uV_1u^{-1}$ are subgroups of $\mathrm{Norm}_G(U_2)$. This is clear for $V_2$, but for $uV_1u^{-1}$, we have $V_1 \leq \mathrm{Norm}_G(U_1)$, and so
\[uV_1u^{-1} \leq u\mathrm{Norm}_G(U_1)u^{-1}=\mathrm{Norm}_G(uU_1u^{-1})=\mathrm{Norm}_G(U_2).\]
Finally, we note that both $V_2$ and $uV_1u^{-1}$ are Hall $\pi'(n)$-subgroups of $\mathrm{Norm}_G(U_2)$, and so they must be conjugate by an element, say $u' \in \mathrm{Norm}_G(U_2)$.

\textit{(ii)} We now show $H_1$ and $H_2$ are in the same $\Aut(G)$-orbit if and only if $U_1$ and $U_2$ are in the same $\Aut(G)$-orbit. Indeed, if there is a $\phi \in \Aut(G)$ such that $\phi(H_1)=H_2$, then $\phi(U_1 \rtimes V_1)=U_2 \rtimes V_2$. We must have $\phi(U_1) = U_2$ as $U_1,U_2$ are the unique Hall $\pi(n)$-subgroups in $H_1,H_2$ respectively. Therefore, now suppose that there is a $\phi \in \Aut(G)$ such that $\phi(U_1)=U_2$. Then $\phi(U_1 \rtimes V_1)=\phi(U_1) \rtimes \phi(V_1)=U_2 \rtimes \phi(V_1)$. Now clearly $U_2$ is conjugate to $U_2$, so by the first part of this proof, $U_2 \rtimes \phi(V_1)$ must be conjugate to $U_2 \rtimes V_2$. Composing $\phi$ with this conjugation, we see that $H_1$ and $H_2$ are in the same $\Aut(G)$-orbit.

\textit{(iii)} Finally, we note that the argument for $(ii)$ can be replicated for $\phi:H_1 \to H_2$ now an isomorphism of abstract groups.
\end{proof}
\begin{corollary}
    Suppose $G=U \rtimes V$ is a transitive subgroup of $\Hol(N)$. Then for any index $n$-subgroup $H$ of $G$, $\mathrm{Core}_G(H)=\mathrm{Core}_G(H\cap U)$.
\end{corollary}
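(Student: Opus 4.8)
The plan is to first reduce the statement to showing that $\mathrm{Core}_G(H)$ is a $\pi(n)$-group, and then to prove the latter from transitivity together with the structure of $\Hol(N)$. Since $Q$ is normal in $\Hol(N)$, the subgroup $U=G\cap Q$ is normal in $G$; hence for every $g\in G$ we have $g(H\cap U)g^{-1}=gHg^{-1}\cap gUg^{-1}=gHg^{-1}\cap U$. Intersecting over all $g\in G$ and pulling the fixed subgroup $U$ out of the intersection gives
\[\mathrm{Core}_G(H\cap U)=\bigcap_{g\in G}\big(gHg^{-1}\cap U\big)=\Big(\bigcap_{g\in G}gHg^{-1}\Big)\cap U=\mathrm{Core}_G(H)\cap U.\]
As $H\cap U\subseteq H$ already yields $\mathrm{Core}_G(H\cap U)\subseteq\mathrm{Core}_G(H)$, the claimed equality is equivalent to the inclusion $\mathrm{Core}_G(H)\subseteq U$; that is, to the assertion that $K:=\mathrm{Core}_G(H)$ is a $\pi(n)$-group.

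To control $K$ I would record two consequences of the set-up. First, transitivity forces $G$ to have no nontrivial normal $\pi'(n)$-subgroup: a Hall $\pi'(n)$-subgroup $V$ is conjugate into $\Phi_2\leq\Aut(N)=\mathrm{Stab}_{\Hol(N)}(1_N)$, so every $\pi'(n)$-subgroup of $G$ lies in a conjugate of $\Aut(N)$ and therefore fixes a point of $N$; a normal $\pi'(n)$-subgroup then has a fixed point, and since all its orbits have equal length (it is normal in the transitive group $G$) it must fix every point and so be trivial. Thus the largest normal $\pi'(n)$-subgroup of $G$ is trivial. Second, since $\Hol(N)=Q\rtimes\Phi_2$ with $\Phi_2$ abelian, the quotient $G/U$ embeds in $\Phi_2$ and is abelian, whence $G'\leq U$. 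Consequently $[G,K]\leq G'\cap K\leq U\cap K=:R$, so the Hall $\pi(n)$-subgroup $R=K\cap U$ of $K$ (Corollary \ref{index_n_subgroups}) is normal in $G$ and the Hall $\pi'(n)$-part $W$ of $K$ is centralised by $G$ modulo $R$.

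Writing $K=R\rtimes W$, I would then aim to show $W=1$. Since $W$ is a Hall $\pi'(n)$-subgroup of the normal subgroup $K$ and any two such are conjugate in $K$, a Frattini argument gives $G=K\,\mathrm{Norm}_G(W)=R\,\mathrm{Norm}_G(W)$, so $W$ is normal in $G$ precisely when $R\leq\mathrm{Norm}_G(W)$, i.e.\ when $[R,W]\leq R\cap W=1$. If $W$ is normal in $G$ it is a normal $\pi'(n)$-subgroup and hence trivial by the first point; so it suffices to prove that $W$ centralises $R$. This is exactly where the hypothesis that $K$ is a \emph{core}, and not merely a normal subgroup contained in an index-$n$ subgroup, must enter: using the Fitting subgroup one finds $R=O_{\pi(n)}(K)=F(K)$ (as $O_{\pi'(n)}(K)\leq O_{\pi'(n)}(G)=1$) and $C_K(R)\leq R$, so $W$ acts faithfully and coprimely on $R$, and such an action can certainly be nontrivial for an arbitrary normal $K$.

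I expect the main obstacle to be precisely this last step — ruling out $[R,W]\neq1$. The way I would attack it is to pass to the block system on $N$ given by the $R$-orbits (genuine blocks, as $R\trianglelefteq G$): the inclusion $[G,W]\leq R$ shows that $W$ preserves each block, and I would argue by induction on $n$ that the containment $K\leq H$ forces the induced action of $W$ on each block to be trivial, so that $W$ fixes $N$ pointwise and is therefore trivial. The cautionary example to keep in mind is a normal copy of $S_3$ inside a transitive $G\leq\Hol(\mathbb{Z}_{15})$: it satisfies $[G,K]\leq R$ and has trivial normal $\pi'(n)$-part in $G$, yet $[R,W]\neq1$; it is excluded only because such an $S_3$ cannot arise as the core of an index-$15$ subgroup (there is no index-$15$ subgroup containing it), and it is this core constraint that the induction must exploit.
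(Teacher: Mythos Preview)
Your opening reduction is clean and correct: the identity $\mathrm{Core}_G(H\cap U)=\mathrm{Core}_G(H)\cap U$ does reduce the claim to showing that $K:=\mathrm{Core}_G(H)$ lies in $U$, and your fixed-point argument that $O_{\pi'(n)}(G)=1$ is valid and useful. From that point on, however, the proposal is not a proof. You correctly isolate the crux --- showing that the Hall $\pi'(n)$-part $W$ of $K$ is normal in $G$ (equivalently $[R,W]=1$) --- but you do not establish it. The Frattini/Fitting reformulation is accurate, and your $S_3$-in-$\Hol(\mathbb{Z}_{15})$ caution shows you understand why the ingredients assembled so far ($[G,K]\le R$ and $O_{\pi'(n)}(G)=1$) cannot by themselves force $[R,W]=1$; but the block-system induction you outline is only a sketch, with no base case, no inductive hypothesis tied to the holomorph setting, and no mechanism explaining how ``$K$ is a core'' feeds into the induction. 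As written, the argument stops exactly at the hard step.

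The paper's route is shorter and attacks that step head-on. Writing $C=\mathrm{Core}_G(H)=U_2\rtimes V_2$ (your $R\rtimes W$), the paper argues directly that $V_2\trianglelefteq G$: for $g\in G$ and $v_2\in V_2$ one has $gv_2g^{-1}\in C$, so $gv_2g^{-1}=u'v'$ with $u'\in U_2$, $v'\in V_2$, and then $u'=1$ is deduced from the coprimality of the orders involved. With $V_2\trianglelefteq G$ in hand, the paper does not use a fixed-point argument but instead exploits Hall conjugacy: $V_2$ sits inside $V_1$, a Hall $\pi'(n)$-subgroup of $H$ which (by Corollary~\ref{index_n_subgroups}) is already a Hall $\pi'(n)$-subgroup of $G$ and hence $G$-conjugate to the Hall $\pi'(n)$-part $V_3$ of $G'=\mathrm{Stab}_G(1_N)$; therefore
\[
V_2=\mathrm{Core}_G(V_2)\subseteq\bigcap_{g\in G}gV_1g^{-1}=\bigcap_{g\in G}gV_3g^{-1}\subseteq\mathrm{Core}_G(G')=\{1\}.
\]
This last chain is the paper's analogue of your observation $O_{\pi'(n)}(G)=1$, reached via conjugacy of Hall complements rather than via orbit lengths. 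So the two approaches converge once $V_2\trianglelefteq G$ is known; the difference is that the paper supplies that step, while your proposal leaves it open.
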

\begin{proof}
    We have $\mathrm{Core}_G(H)\supseteq\mathrm{Core}_G(H\cap U)$. As mentioned in Lemma  \ref{unique_Hall}, we have that $G=U \rtimes V$, where $U:=G \cap Q$ and $V$ is some subgroup of $G$ of order coprime to $n$. By Corollary \ref{index_n_subgroups}, we also have that any index $n$ subgroup $H$ of $G$ has the form $H=U_1 \rtimes V_1$, where $U_1$ is an index $n$ subgroup of $U$ and $V_1$ is conjugate to $V$ in $G$.

    Suppose then that $C:=\mathrm{Core}_G(H)=U_2 \rtimes V_2$ where $U_2 \leq U_1$ and $V_2$ is a complement of $U_2$ in $C$ (that is $V_2$ is conjugate to some subgroup of $V_1$ by Schur-Zassenhaus). In particular, $C$ is normal in $G$, and so $gV_2g^{-1} \leq C$ for all $g \in G$. Therefore, for each $v_2 \in V_2$, there exist a $u' \in U_2$ and $v' \in V_2$ such that $gv_2g^{-1}=u'v'$. However, we then have $u'=gv_2g^{-1}v'^{-1}$, and as $u'$ has coprime order with both $v_2$ and $v'$, we must have that $u'=1$, and so $V_2$ must be normal in $G$. In particular, we have that $V_2=\mathrm{Core}_G(V_2)$. We claim that $V_2$ is trivial and so $C=U_2=\mathrm{Core}_G(H \cap U)$.
    
    Since $G$ is a transitive subgroup of $\Hol(N)$, the index $n$ subgroup $G'=\mathrm{Stab}_G(1_N)$ of $G$ has trivial core. Now by Corollary \ref{index_n_subgroups}, we have that $G'=U_3 \rtimes V_3$ where $V_3$ is a $\pi'(n)$-subgroup of $G$. In particular, $V_1$ is conjugate to $V_3$ in $G$ and so

    \[\mathrm{Core}_G(V_2) \subseteq \bigcap_{g \in G}gV_1g^{-1}=\bigcap_{g \in G}gV_3g^{-1} \subseteq \bigcap_{g \in G}gG'g^{-1}=\{1\}.\]
\end{proof}
\section{Parallel extensions of degree \texorpdfstring{$pq$}{pq}}\label{par_pq}
We now specialise to the case $n=pq$ with $p>q$ distinct odd primes. In this section, we investigate the Hopf--Galois structures on parallel extensions of degree $n$. We start with a few more results.
\begin{lemma}\label{pq_Burnside}
    Let $L/K$ be a separable extension of degree $pq$ with $p \not\equiv 1\pmod{q}$. Then any parallel extension to $L/K$ admits a unique Hopf--Galois structure, which is of cyclic type.
\end{lemma}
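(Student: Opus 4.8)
The plan is to reduce the statement to an explicit structural description of $G:=\Gal(E/K)$ and then count regular subgroups directly. Since $p \not\equiv 1 \pmod q$, the unique group of order $pq$ is the cyclic group $C_{pq}$; consequently the only possible type of a Hopf--Galois structure on a degree $pq$ extension is cyclic, and, working in the framework of Section \ref{par_prelim}, $L/K$ corresponds to a transitive subgroup $G \leq \Hol(C_{pq})$. The arithmetic input I would isolate first is that $p \not\equiv 1 \pmod q$ together with $p>q$ forces $\gcd\bigl(pq,(p-1)(q-1)\bigr)=1$: indeed $p \nmid p-1$, $p \nmid q-1$ (as $0<q-1<p$), $q \nmid q-1$, and $q \nmid p-1$ is exactly the hypothesis. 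Hence $\Aut(C_{pq})\cong C_{p-1}\times C_{q-1}$ is a $\pi'(pq)$-group, so in the notation of Lemma \ref{unique_Hall} one has $\Phi_1=1$ and the unique Hall $\pi(pq)$-subgroup of $\Hol(C_{pq})$ is $Q=C_{pq}$ itself.

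Next I would pin down the shape of $G$. By Lemma \ref{unique_Hall}, $G=U\rtimes V$ with $U=G\cap Q=G\cap C_{pq}$. Since $G$ is transitive of degree $pq$ we have $pq \mid |G|=|U|\,|V|$, and as $\gcd(|V|,pq)=1$ this gives $pq \mid |U|$; but $U\leq C_{pq}$ forces $|U|=pq$, so $U=C_{pq}$. Thus $G=C_{pq}\rtimes V$ with $C_{pq}$ a regular normal subgroup and $G'=\mathrm{Stab}_G(1_N)=V\leq\Aut(C_{pq})$. This is the ``Burnside-type'' conclusion that $G$ carries a normal, hence regular, cyclic subgroup of order $pq$. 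I would then treat the parallel extensions: by Corollary \ref{index_n_subgroups} every index $pq$ subgroup $H$ has the form $U'\rtimes V'$ with $U'\leq C_{pq}$ of index $pq$, whence $U'=1$, and with $V'$ conjugate to $V$. So $H$ is a conjugate of $G'$, and by Proposition \ref{conj_orbs}(i) all index $pq$ subgroups form a single conjugacy class; every parallel extension is therefore a conjugate of $L/K$. Moreover $H\cap U=1$ gives $\mathrm{Core}_G(H)=1$, so each parallel extension retains the full Galois closure $E$ with pair $(G,H)\cong(G,G')$ as permutation groups; there is no shrinking of the Galois closure, in contrast to the $D_8$ example. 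It thus suffices to prove that $L/K$ itself admits exactly one Hopf--Galois structure.

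The heart of the argument is this uniqueness count. By Greither--Pareigis the Hopf--Galois structures on $L/K$ correspond to the regular subgroups $\widetilde N\leq\mathrm{Perm}(G/G')$ normalised by $\lambda(G)$; identifying $G/G'$ with $C_{pq}$ via $gG'\mapsto g\cdot 1_N$ so that $\lambda(G)=G\leq\Hol(C_{pq})$, these are the regular subgroups of $\mathrm{Perm}(C_{pq})$ normalised by $G$, hence in particular by $U=C_{pq}$. Any such $\widetilde N$ has order $pq$, so is itself cyclic (again as $p\not\equiv 1\pmod q$), and $|\Aut(\widetilde N)|=(p-1)(q-1)$ is coprime to $pq$. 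The conjugation map $U\to\Aut(\widetilde N)$ is therefore trivial, so $U$ centralises $\widetilde N$; since a regular abelian subgroup is self-centralising in $\mathrm{Perm}(C_{pq})$, we get $U\leq C(\widetilde N)=\widetilde N$, and comparing orders yields $\widetilde N=U$. Hence $U=C_{pq}$ is the unique Hopf--Galois structure on $L/K$, necessarily of cyclic type, and the same holds for every parallel extension.

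The existence half is immediate, since $U\trianglelefteq G$ is manifestly a regular cyclic structure; I expect the \emph{main obstacle} to be the uniqueness step, i.e.\ the self-centralising/coprimality argument that eliminates every regular subgroup other than the canonical $U$. I would also take care to record where the hypotheses enter: the coprimality $\gcd(pq,(p-1)(q-1))=1$ (equivalently $p\not\equiv 1\pmod q$) is used both to force $Q=C_{pq}$ and to trivialise the conjugation map into $\Aut(\widetilde N)$, and the analysis rests on the standing assumption of Section \ref{par_prelim} that $G$ is a transitive subgroup of $\Hol(N)$, so that $N=C_{pq}$ and $G\leq\Hol(C_{pq})$; this is precisely what distinguishes the present situation from that of an arbitrary transitive group of degree $pq$.
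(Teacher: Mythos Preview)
Your argument has a genuine gap at the outset. You assert that ``$L/K$ corresponds to a transitive subgroup $G \leq \Hol(C_{pq})$'' and later justify this by invoking a ``standing assumption of Section~\ref{par_prelim}''. No such assumption is made there: Section~\ref{par_prelim} develops tools for analysing transitive subgroups of $\Hol(N)$, but the hypothesis of Lemma~\ref{pq_Burnside} is an \emph{arbitrary} separable extension $L/K$ of degree $pq$. Saying that $(G,G')$ embeds as a transitive subgroup of $\Hol(C_{pq})$ is \emph{equivalent}, via Byott's translation, to saying that $L/K$ admits a Hopf--Galois structure of cyclic type---which is precisely the existence half of what you must prove. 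Your remark that ``existence is immediate since $U \trianglelefteq G$'' is therefore circular: it uses an embedding you have not yet earned.

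The missing ingredient is Burnside's classical theorem that a transitive permutation group of degree $n$ with $\gcd(n,\varphi(n))=1$ contains a regular normal (hence cyclic) subgroup; this is what forces $G$, viewed faithfully on $G/G'$, into $\Hol(C_{pq})$. The paper does not reprove this either: it simply observes that $pq$ is a Burnside number and cites Theorem~1 of \cite{Byo96}, which packages existence and uniqueness together. Since any parallel extension is again separable of degree $pq$, the same citation applies to it verbatim, and the proof is two lines. Your uniqueness argument (via the self-centralising property of regular abelian subgroups) and your reduction of parallel extensions to conjugates of $L/K$ are both correct---indeed they essentially reconstruct Byott's theorem in this special case---but they only become available once the embedding $G \leq \Hol(C_{pq})$ has been justified.
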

\begin{proof}
    If $p \not\equiv 1\pmod{q}$, then $\gcd(pq,\varphi(pq))=1$, so $pq$ is a Burnside number. Theorem 1 of \cite{Byo96} then tells us that every separable extension of degree $pq$ admits a unique Hopf--Galois structure, which is of cyclic type. If $L/K$ is a separable extension of degree $pq$, then any parallel extension to $L/K$ also has degree $pq$ and thus also admits a unique Hopf--Galois structure, which is also of cyclic type.
\end{proof}
\begin{lemma}\label{triv_core}
    Let $L_1/K, L_2/K$ be two separable extensions of degree $pq$ with the same Galois closure $E$ such that they both admit Hopf--Galois structures. Then $L_1/K$ admits a Hopf--Galois structure of type $N$ if and only if $L_2/K$ admits a Hopf--Galois structure of type $N$.
\end{lemma}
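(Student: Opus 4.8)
The plan is to show that the two point stabilisers $H_1 := \Gal(E/L_1)$ and $H_2 := \Gal(E/L_2)$ lie in a single $\Aut(G)$-orbit, where $G := \Gal(E/K)$, and then to invoke the first Remark of Section~\ref{par_prelim}. Since $E$ is the Galois closure of each $L_i/K$, both $H_1$ and $H_2$ have trivial core in $G$, i.e.~$C_1 = C_2 = \{1\}$ in the notation of that Remark. Thus any $\phi \in \Aut(G)$ with $\phi(H_1) = H_2$ descends to an isomorphism of pairs $(G,H_1) \cong (G,H_2)$; since $L_i/K$ admits a Hopf--Galois structure of type $N$ exactly when $(G,H_i)$ is isomorphic as a permutation group to a transitive $(M,M') \leq \Hol(N)$, transporting such an isomorphism along $\phi$ shows that $L_1/K$ admits a structure of type $N$ if and only if $L_2/K$ does, for every $N$. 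So everything reduces to the orbit claim.

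To set up that claim I would use the Hall machinery of Section~\ref{par_prelim}. As both $L_i/K$ admit a Hopf--Galois structure, $G$ is isomorphic to a transitive subgroup of some $\Hol(N)$ with $|N| = pq$, hence $G$ is solvable and I may write $G = U \rtimes V$ as in Lemma~\ref{unique_Hall} and $H_i = U_i \rtimes V_i$ as in Corollary~\ref{index_n_subgroups}, where each $U_i$ is an index-$pq$ subgroup of the Hall $\pi(pq)$-subgroup $U$. By the Corollary following Proposition~\ref{conj_orbs}, $\mathrm{Core}_G(H_i) = \mathrm{Core}_G(U_i)$, so each $U_i$ also has trivial core; and by Proposition~\ref{conj_orbs}(ii) the subgroups $H_1,H_2$ lie in the same $\Aut(G)$-orbit if and only if $U_1,U_2$ do. Hence it suffices to prove that $U_1$ and $U_2$ are $\Aut(G)$-conjugate.

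The remaining, and genuinely hard, step is the structural analysis of $U$. If $p \not\equiv 1 \pmod{q}$ there is nothing to do: Lemma~\ref{pq_Burnside} already forces every parallel extension to carry only the cyclic structure. So assume $p \equiv 1 \pmod q$, write $q^s$ for the exact power of $q$ dividing $p-1$, and recall there are exactly two groups of order $pq$, the cyclic group $C_{pq}$ and the nonabelian $N_0 = \langle \sigma,\tau \mid \sigma^p = \tau^q = 1,\ \tau\sigma\tau^{-1} = \sigma^k\rangle$. Feeding these into the description of the Hall $\pi(pq)$-subgroup $Q = N \rtimes (\langle\theta\rangle \rtimes \Phi_1)$ from the proof of Lemma~\ref{unique_Hall} gives $Q \cong (C_p \rtimes C_{q^s}) \times C_q$ when $N = C_{pq}$ and $Q \cong (C_p \rtimes C_q) \rtimes (C_p \rtimes C_{q^s})$ when $N = N_0$. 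I would then classify the transitive subgroups $G \leq \Hol(N)$ of degree $pq$, read off the possible Hall subgroups $U = G \cap Q$, enumerate their index-$pq$ subgroups of trivial core, and check directly that all those corresponding to extensions admitting a Hopf--Galois structure form a single $\Aut(G)$-orbit. The main obstacle is precisely this case analysis: the triviality-of-core condition is what trims the list of candidate $U_i$ down to a short one, and Hall's theorem (uniqueness of $U$ and conjugacy of the complements $V_i$) keeps the bookkeeping tractable, but verifying the single-orbit conclusion in each type is where the real work lies.
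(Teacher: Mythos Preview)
Your reductions via the Hall machinery of Section~\ref{par_prelim} are sound, and your target statement---that the core-free index-$pq$ subgroups of $G$ which correspond to extensions admitting a Hopf--Galois structure form a single $\Aut(G)$-orbit---is indeed equivalent to the lemma. But the paper does not prove this by case analysis. Instead it invokes a classification result from \cite{Dar24(a)}: for transitive subgroups $M_1\leq\Hol(N_1)$ and $M_2\leq\Hol(N_2)$ in degree $pq$, isomorphism as permutation groups is equivalent to isomorphism as abstract groups. Since by hypothesis $(G,G_i)\cong(M_i,M_i')$ for some transitive $M_i$, and $M_1\cong G\cong M_2$ abstractly, this immediately gives $(M_1,M_1')\cong(M_2,M_2')$ as permutation groups, hence both $L_i/K$ admit structures of both types $N_1$ and $N_2$. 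The whole argument is two sentences.

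What you label ``the real work''---enumerating transitive subgroups, their core-free index-$pq$ subgroups, and checking the single-orbit claim type by type---is precisely a re-derivation of that cited fact. So your route is valid but recreates from scratch a result the paper simply imports; moreover, your check of which $(G,H)$ ``correspond to extensions admitting a Hopf--Galois structure'' already requires the full list of transitive subgroups up to permutation isomorphism, which is again the content of the \cite{Dar24(a)} classification. The payoff of the paper's approach is brevity and a clean separation of concerns; the payoff of yours would be self-containment, at the cost of essentially redoing Section~3 of \cite{Dar24(a)} inside this proof.
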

\begin{proof}
    Let $G:=\Gal(E/K)$, $G_1:=\Gal(E/L_1)$ and $G_2:=\Gal(E/L_2)$. If both $L_1/K$ and $L_2/K$ admit Hopf--Galois structures, then there are transitive subgroups $M_1 \leq \Hol(N_1)$, $M_2 \leq \Hol(N_2)$ with $M_1':=\mathrm{Stab}_{M_1}(1_{N_1})$, $M_2':=\mathrm{Stab}_{M_2}(1_{N_2})$ and isomorphisms $\phi_1:G \to M_1$, $\phi_2:G \to M_2$ with $\phi_1(G_1)=M_1'$ and $\phi_2(G_2)=M_2'$ respectively.
    
    It was observed in section 3 of \cite{Dar24(a)} (immediately after the proof of Proposition 3.19) that $M_1$ and $M_2$ are isomorphic as permutation groups if and only if they are isomorphic as abstract groups. Given that $M_1 \cong G \cong M_2$, we therefore must have that $M_1$ and $M_2$ are also isomorphic as permutation groups, and therefore both $L_1/K$ and $L_2/K$ admit Hopf--Galois structures of types $N_1$ and $N_2$.
\end{proof}
\begin{remark}
    Let $G \leq \Hol(N)$ be a transitive subgroup for some group $N$ of order $pq$, and suppose $G$ has an index $pq$ subgroup $H$ such that $C:=\mathrm{Core}_G(H)=\{1\}$. By the discussion in Section \ref{intro}, this situation corresponds to a separable extension $L/K$, with Galois closure $E/K$, admitting a Hopf--Galois structure of type $N$, and a parallel extension $L'/K$ with the same Galois closure.

    If $L'/K$ admits a Hopf--Galois structure, then Lemma \ref{triv_core} tells us that the Hopf--Galois structures on $L'/K$ will be of the same types as those on $L/K$. In fact by the proof of Lemma \ref{triv_core}, in order to check whether $L'/K$ admits Hopf--Galois structures, we need only check whether there is an isomorphism $\phi:G \cong G/C \to G$ such that $\phi(H/C)=\mathrm{Stab}_G(1_N)$. This allows us to quickly test $L'/K$ for Hopf--Galois structures.
\end{remark}
 Given Lemma \ref{pq_Burnside}, we will assume that $p \equiv 1\pmod{q}$ for the rest of this section. There are then two groups $N$ of order $pq$, that is the cyclic group $C_{pq}$ and the non-abelian metacyclic group $C_p \rtimes C_q$. In each case, we describe $N$ using two generators $\sigma, \tau$ of orders $p$ and $q$ respectively. For $C_{pq}$, we impose the relation $\sigma\tau=\tau\sigma$, and for $C_p\rtimes C_q$, we set $\tau\sigma=\sigma^k\tau$ where $k$ has order $q \bmod{p}$. Sections \ref{cyclic} and \ref{metacyclic} will consider these two groups in turn by working with subgroups of $\Hol(N)$ which are transitive on $N$.
 
\subsection{Transitive subgroups of \texorpdfstring{$\Hol(N)$}{Hol(N)}}\label{trans_subs}
We now recall the list of transitive subgroups of $\Hol(N)$ respectively for $N$ cyclic (I) and $N$ non-abelian (II) of order $pq$. These are computed in \cite{Dar24(a)}. Let $e_0$ be the largest power of $q$ dividing $p-1$, and $s=(p-1)/q^{e_0}$.

(I) Let $N\cong C_{pq}$, and let $\alpha$ generate the unique (cyclic) Sylow $q$-subgroup of $\Aut(\langle\sigma\rangle)$, which has order $q^{e_0}$. Given some $1 \leq c \leq e_0$ and $1 \leq t \leq q^c-1$ with $q \nmid t$, we define
\[J_{t,c}:=\left\langle \sigma, \left[\tau,\alpha^{tq^{e_0-c}}\right]\right\rangle.\]
Then the subgroups of $\Hol(N)$ transitive on $N$ are given in Table \ref{cyclic-trans-subgroups}.
\begin{table}[H]
    \setlength{\extrarowheight}{3.5mm}
    \text{ .}
    \centering
    \scalebox{1.1}{
    \begin{tabular}{|c|c|}
        \hline
	Parameters	&	Transitive subgroup\\
	\hline
	$X \leq \Aut(N)$ &   $N \rtimes X$\\[5pt]
	\hline
	$Y \leq \Aut(\langle\sigma\rangle)$ s.t. $\alpha \not\in Y$,  &	$J_{t,c} \rtimes Y$\\
        $1 \leq c \leq e_0, 1 \leq t \leq q^c-1$ s.t. $q \nmid t$   &\\[5pt]
	\hline
    \end{tabular}}
    \vskip2mm
    \caption{Transitive subgroups of $\Hol(N)$ for $N$ cyclic}
    \label{cyclic-trans-subgroups}
\end{table}
(II) Let $N \cong C_p \rtimes C_q$. Let $\theta \in \Aut(N)$ be such that $\theta(\sigma)=\sigma$ and $\theta(\tau)=\sigma\tau$, hence $\theta$ has order $p$. One may view $\Hol(N)$ as $\mathbb{F}_p^2\rtimes \langle T,A,B \rangle$, where $\e_1:=(1,0)^{\textbf{t}}$ and $\e_2:=(0,1)^{\textbf{t}}$ are the basis vectors of $P:=\mathbb{F}_p^2$ corresponding to $\sigma$ and $\sigma\theta^{k-1}$ respectively, and $T,A,B$ are commuting linear maps corresponding to $\tau$ (which has order $q$) and elements of $\Aut(N)$ of orders $q^{e_0}$ and $s$ respectively. These act on the unique Sylow $p$-subgroup $P$ of $\Hol(N)$ as follows:
\begin{align*}
    &T\e_1=a_{\alpha}^{q^{e_0-1}}\e_1=k\e_1,	&&T\e_2=\e_2,\\
    &A\e_1=a_{\alpha}\e_1,				&&A\e_2=a_{\alpha}\e_2,\\
    &B\e_1=a_{\beta}\e_1,				&&B\e_2=a_{\beta}\e_2.
\end{align*}
Here $a_{\alpha}$ and $a_{\beta}$ have orders $q^{e_0}$ and $s$ modulo $p$ respectively. Table \ref{metacyclic-trans-subs} gives a list of the transitive subgroups of $\Hol(N)$ whose order is divisible by $p^2$. By Theorem 3.22 of \cite{Dar24(a)}, every transitive subgroup of $\Hol(N)$ whose order is not divisible by $p^2$ is isomorphic, as a permutation group, to a transitive subgroup of $\Hol(C_{pq})$. As our discussion in this paper depends only on the isomorphism classes of the permutation groups, we need not revisit these groups here.
\begin{table}[H]
    \setlength{\extrarowheight}{3.5mm}
    \text{ .}
    \centering
    \scalebox{1.1}{
    \begin{tabular}{|c|c|c|}
        \hline
	Parameters	&	Group	&	Order\\
	\hline
	$0\leq c \leq e_0$,  $d|s$ &   $P \rtimes \left\langle T,A^{q^{e_0-c}},B^{s/d} \right\rangle$	&	$p^2q^{1+c}d$\\[5pt]
	\hline
	$1\leq c\leq e_0$,  $d|s$, $u \in \mathbb{Z}_{q^c}^{\times}$&	$P \rtimes \left\langle TA^{uq^{e_0-c}},B^{s/d} \right\rangle$	&	$p^2q^cd$\\[5pt]
	\hline
    \end{tabular}}
    \vskip2mm
    \caption{Transitive subgroups of $\Hol(N)$ of order divisible by $p^2$ for $N$ non-abelian}
    \label{metacyclic-trans-subs}
\end{table}
We start by investigating the parallel extensions to those admitting Hopf--Galois structures of cyclic type.

\subsection{Cyclic case}\label{cyclic}
Let $N$ be the cyclic group of order $pq$, let $G$ be a transitive subgroup of $\text{Hol}(N)$ of order $|G|=r$, and let $Q:=\langle \sigma,\tau,\alpha \rangle$, the unique Hall $\left\{p,q\right\}$-subgroup of $\Hol(N)$ of order $pq^{1+e_0}$.
\begin{proposition}\label{cyclic-conjugacy}
    Let $X,Y,t$ and $c$ be as in section \ref{trans_subs}.
	
    If $G=N \rtimes X$, then
    \[\#\mathrm{Cl}_{pq}(G)=\begin{cases*}
    1, \;\; \text{if }q^2 \nmid r,\\
    2, \;\; \text{if } q^2 \mid r \text{ and } \tau \notin Z(G),\\
    q+1, \;\; \text{if } q^2 \mid r \text{ and } \tau \in Z(G).
    \end{cases*}\]

    If $G=J_{t,c} \rtimes Y$, then
    \[\#\mathrm{Cl}_{pq}(G)=1.\]
Further, for any such $G$, all parallel extensions to any extension associated with $G$ admit at least one Hopf--Galois structure of cyclic type.
\end{proposition}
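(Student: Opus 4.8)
The plan is to count conjugacy classes of index $pq$ subgroups by passing to their Hall $\{p,q\}$-parts, as licensed by Proposition \ref{conj_orbs}(i). Writing $G = U \rtimes V$ with $U = G \cap Q$ as in Lemma \ref{unique_Hall}, every index $pq$ subgroup $H$ has the form $H = U' \rtimes V'$ of Corollary \ref{index_n_subgroups} with $U' = H \cap Q$ of order $q^c$, and $H_1,H_2$ are conjugate in $G$ if and only if $U_1',U_2'$ are. The decisive subtlety — and what I expect to be the main obstacle — is that not every order $q^c$ subgroup of $U$ occurs as such a Hall part: since $U'$ is normal in $H$, it must be normalised by the complement $V'$, so the $U'$ that actually arise are exactly those normalised by some $G$-conjugate of $V$. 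I would first record the structure of $U$: it has normal Sylow $p$-subgroup $\langle \sigma \rangle$ and abelian Sylow $q$-subgroup $S$, and since every order $q^c$ subgroup meets $\langle\sigma\rangle$ trivially it injects into $U/\langle\sigma\rangle \cong S$. Because $U$ acts trivially by conjugation on this abelian quotient, two such subgroups are $U$-conjugate precisely when they have the same image, so the $U$-classes biject with the index $q$ subgroups of $S$.

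For $G = N \rtimes X$ one has $S = \langle\tau\rangle \times X_q$ with $X_q := X \cap \langle\alpha\rangle$ cyclic of order $q^c$; let $\beta$ generate $X_q$. When $q^2 \nmid r$ we have $c=0$, so $S \cong C_q$ is cyclic with a single index $q$ subgroup, giving one class. When $c \geq 1$, the group $S \cong C_q \times C_{q^c}$ has exactly $q+1$ index $q$ subgroups, which I would read off as the lines in $S/S^q \cong \mathbb{F}_q^2$ spanned by the images of $\tau$ and $\beta$: the $\langle\beta\rangle$-axis, the $\langle\tau,\beta^q\rangle$-axis, and $q-1$ ``diagonals''. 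The conjugation action of $V$ fixes $\beta$ (as $\Aut(N)$ is abelian) and sends $\tau \mapsto \tau^m$, where $m \in \mathbb{Z}_q^{\times}$ is the image of $V$ in $\Aut(\langle\tau\rangle)$; on the set of lines this fixes the two axes and permutes the diagonals by scaling, and $\tau \in Z(G)$ is equivalent to $m \equiv 1$. If $\tau \in Z(G)$ then $V$ centralises $S$, all $q+1$ classes are realisable and remain distinct, giving $q+1$. Otherwise, since $U$ centralises $\tau$ every $G$-conjugate of $V$ acts on $\langle\tau\rangle$ through the same nontrivial $m$, so each diagonal is moved; as $U$-conjugation preserves the image, no conjugate of $V$ can normalise a diagonal, leaving only the two axes realisable and giving $2$.

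For $G = J_{t,c} \rtimes Y$ the generator $w := [\tau, \alpha^{tq^{e_0-c}}]$ ties $\tau$ to an automorphism, and the genuinely new members of this family are those with $\langle\sigma,\tau\rangle \not\le G$, that is $\tau \notin G$. I would use this to force the Sylow $q$-subgroup $S \le \langle\tau\rangle \times \langle\alpha\rangle \cong C_q \times C_{q^{e_0}}$ to be cyclic: a non-cyclic subgroup of $C_q \times C_{q^{e_0}}$ has rank two, hence contains the full $q$-torsion and so $\tau$, which is excluded; thus $S$ has a unique index $q$ subgroup and $\#\mathrm{Cl}_{pq}(G)=1$. For the final cyclic-type assertion I would argue class by class. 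The $\langle\beta\rangle$-axis is the class of the point stabiliser $G'=X$ (whose core is trivial), and in the $J_{t,c}\rtimes Y$ case every index $pq$ subgroup is conjugate to $G'$; in both situations the parallel extension has the same Galois closure as the starting one and is conjugate to it, so by Lemma \ref{triv_core} it admits the same cyclic-type structure. For the remaining realisable classes I would compute $C=\mathrm{Core}_G(H)$, noting that $\langle\tau\rangle \trianglelefteq G$ (so $\langle\tau\rangle \le C$ for the $\langle\tau\rangle$-containing classes), and verify directly that $(G/C,H/C)$ — which still carries the normal order $p$ subgroup inherited from $\langle\sigma\rangle$ — is isomorphic as a permutation group to an entry of Table \ref{cyclic-trans-subgroups}, hence lies in $\Hol(C_{pq})$ and furnishes a cyclic-type structure. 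I expect the realisability restriction to be the conceptual heart of the argument, with these core computations and the matching against Table \ref{cyclic-trans-subgroups} being the most laborious but routine step.
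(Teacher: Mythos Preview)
Your proposal is correct and follows essentially the same strategy as the paper: reduce to the Hall $\{p,q\}$-part $H\cap Q$ via Proposition~\ref{conj_orbs}, classify these by their image in the abelian Sylow $q$-quotient $S$, and then use the action of the $\pi'(pq)$-complement $V$ to cut down the list when $\tau\notin Z(G)$; the final Hopf--Galois assertion is handled in both by computing cores and matching $(G/C,H/C)$ against Table~\ref{cyclic-trans-subgroups}. The one genuine difference is packaging: you isolate the constraint ``$U'$ must be normalised by a conjugate of $V$'' as an explicit \emph{realisability} condition and argue that the diagonal lines in $S/S^q$ fail it when $\tau\notin Z(G)$, whereas the paper writes down the candidate $H_1=\langle[\tau^b,\alpha^{q^{e_0-c}}]\rangle$ directly, pulls a $\pi'(pq)$-element $\beta$ into $H$, and shows that $b\neq 0$ forces $\tau\in H_1$ and hence $|H_1|$ too large --- the same obstruction, derived by contradiction rather than stated as a normalisation criterion. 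Your version is a clean conceptual reformulation of the paper's computation; just be sure, when you carry out the core calculations, to treat the diagonal classes (which occur only when $\tau\in Z(G)$, have trivial core, but are \emph{not} conjugate to $G'$) separately from the $\langle\tau,\beta^q\rangle$-axis --- the paper handles these via an explicit automorphism $\phi\colon G\to G$ sending $H$ to $G'$, which is what your appeal to Lemma~\ref{triv_core} ultimately needs.
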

\begin{proof}
    We begin by noting that for any transitive subgroup $G<\Hol(N)$ such that $q^2 \nmid r$, any index $pq$ subgroup of $G$ is a Hall $\pi'(pq)$-subgroup of $G$, and so they are all conjugate. Thus we assume that $q^2 \mid r$ for the remainder of the proof, and thus we let $c+1$ be the largest power of $q$ dividing $r$ (and so $1 \leq c \leq e_0$). For the index $pq$ subgroups $H$ of $G$, we also note that by Proposition \ref{conj_orbs}, we need only look at the conjugacy classes of $H \cap Q$ in $G$. 
	
    Now fix a subgroup $X \leq \Aut(N)$ and consider the transitive subgroup $G=N \rtimes X$ of $\Hol(N)$. Let $H$ be an index $pq$ subgroup of $G$. Then it can be shown that $H \cap Q$ has one of the following forms:
    \[H_1=\left\langle \left[\sigma^a\tau^b,\alpha^{q^{e_0-c}}\right]\right\rangle \;\; \text{ or } \;\; H_2=\left\langle \left[\sigma^a,\alpha^{q^{e_0-(c-1)}}\right],\tau \right\rangle,\]
    for some $0 \leq a \leq p-1$ and $0 \leq b \leq q-1$, noting that, for subgroups of type $H_2$, we have $a=0$ whenever $c=1$. With a view to computing $\#\mathrm{Cl}_{pq}(G)$, we first note that we can conjugate any of these subgroups by a suitable power of $\sigma$ to obtain a subgroup of the same type but with $a=0$. Given this observation, we abuse notation and now work with the representatives
    \begin{equation}\label{index_gen}
        H_1=\left\langle \left[\tau^b,\alpha^{q^{e_0-c}}\right]\right\rangle \;\; \text{ or } \;\; H_2=\left\langle \alpha^{q^{e_0-(c-1)}},\tau \right\rangle.
    \end{equation}
    If $\tau \notin Z(G)$, then there is an element $\beta \in G$ such that conjugation by $\beta$ gives a non-trivial automorphism of $\langle\tau\rangle$. Thus we have, say, $\beta\tau=\tau^x\beta$ for some $1 \leq x \leq q-1$. We claim that we can assume $\beta \in H$. This is because $\beta$ has order coprime with $pq$ and so lies in a Hall $\pi'(pq)$-subgroup of $G$. Further by Corollary \ref{index_n_subgroups}, we have that $H$ contains another Hall $\pi'(pq)$-subgroup of $G$ and so also an element $\beta'$ conjugate to $\beta$. Finally, as both $N$ and $\Aut(N)$ are abelian, and so it can be shown that $\beta'\tau\beta'^{-1}=\beta\tau\beta^{-1}$.
    
    Now, if we conjugate the generator $\left[\tau^b,\alpha^{q^{e_0-c}}\right]$ of $H_1$ by $\beta$ and $\beta^2$ respectively and assume that $b \neq 0$, we see that $\left[\tau^{bx},\alpha^{q^{e_0-c}}\right],\left[\tau^{2bx},\alpha^{q^{e_0-c}}\right] \in H_1$. In particular, we then get that $\tau^{bx} \in H_1$, and thus $\tau,\alpha^{q^{e_0-c}} \in H_1$. However, this then means that $q^{c+1} \mid |H_1|$, which gives a contradiction as then $H_1$ has index $p$ in $G$. So we must have $b=0$ in (\ref{index_gen}). We note that $H_1$ is cyclic and $H_2$ is not, unless $c=1$. Thus, for $c>1$, we obtain two conjugacy classes of index $pq$ subgroups of $G$, represented by $H_1$ and $H_2$. Each of these classes is of size $p$. For $c=1$, we get the families represented by
    \begin{equation}
        H_1=\left\langle \alpha^{q^{e_0-1}}\right\rangle \;\; \text{ or } \;\; H_2=\left\langle \tau \right\rangle.
    \end{equation}
    It is clear that they remain in distinct conjugacy classes as $\langle \tau \rangle$ is a normal subgroup in $G$. Thus $H_1$ represents a conjugacy class of size $p$, and $H_2$ represents a class of size $1$.

    We now compute $C:=\mathrm{Core}_G(H)$ for each index $pq$ subgroup $H$ of $G$, with a view to seeing whether $G/C$ appears as a transitive subgroup $J$ of $\Hol(M)$ with $\mathrm{Stab}_J(1_M) \cong H/C$ for $M=C_{pq}$ or $M=C_p \rtimes C_q$. We will find that taking $M=C_{pq}=N$ is sufficient, showing that all parallel extensions in this situation admit a Hopf--Galois structure of cyclic type.
	
    For an index $pq$ subgroup $H$ containing a group of type $H_1$, we see that $H$ is conjugate to $G'=\mathrm{Stab}_G(1_N)$, and therefore $C=\{1\}$. The corresponding parallel extension is therefore conjugate to the original extension and so admits a Hopf--Galois structure of cyclic type.
	
    For an index $pq$ subgroup $H$ containing a group of type $H_2$, we compute that $C=\langle\tau\rangle$. For some $X' \leq \Aut(N)$ of coprime order to $pq$, we therefore have:
    \[G/C \cong \langle \sigma,\alpha^{q^{e_0-c}} \rangle \rtimes X'\]
    and
    \[H/C \cong \left\langle\left[\sigma^a,\alpha^{q^{e_0-(c-1)}}\right]\right\rangle \rtimes X'.\]
    Consider the isomorphism $\phi: G/C \to J_{1,c} \rtimes X'$ given by:
    \begin{align*}
        &\phi(\sigma)=\sigma,	&&\phi(\alpha^{q^{e_0-c}})=\left[\sigma^{-a}\tau,\alpha^{q^{e_0-c}}\right],
    \end{align*}
    with $\phi$ acting as identity on $X'$. Then we note that $\phi$ sends $H/C$ to
    \[\left\langle \alpha^{q^{e_0-(c-1)}} \right\rangle \rtimes X'=\mathrm{Stab}_{J_{1,c}\rtimes X'}(1_{C_{pq}})\]
    giving an isomorphism from $(G/C,H/C)$ to a transitive subgroup of $\Hol(C_{pq})$. The corresponding parallel extension therefore admits the same Hopf--Galois structures as an extension with Galois closure isomorphic to $J_{1,c} \rtimes X'$.
		
    Suppose now that $\tau \in Z(G)$. We obtain no restrictions on $b$. In this case, conjugating by any element does not change the power of $\tau$, and thus $H_1$ represents $q$ conjugacy classes, each of size $p$. By a similar argument as above, $H_2$ again represents a single conjugacy class (of size $p$ if $c>1$ and of size $1$ if $c=1$) which is distinct from any class represented by $H_1$.
	
    The core of any conjugacy class represented by $H_1$ is trivial. For some $X' \leq \Aut(N)$ of coprime order to $pq$, we therefore have:
    \[G/C=G/\{1\} \cong G = \left\langle \sigma,\tau,\alpha^{q^{e_0-c}} \right\rangle \rtimes X'\]
    and
    \[H/C=H/\{1\} \cong H = \left\langle \left[\tau^b,\alpha^{q^{e_0-c}}\right] \right\rangle\]
    for some $0 \leq b \leq q-1$. Consider now the isomorphism $\phi: G \to G$ given by
    \[\phi\left(\alpha^{q^{e_0-c}}\right)=\left[\tau^{-b},\alpha^{q^{e_0-c}}\right]\]
    and acting as identity on $N$ and $X'$. Then we note that $\phi$ sends $H$ to \[G'=\left\langle \alpha^{q^{e_0-c}} \right\rangle \rtimes X',\]
    giving an isomorphism from $(G/\{1\},H/\{1\})$ to a transitive subgroup of $\Hol(C_{pq})$. The corresponding parallel extension therefore admits a Hopf--Galois structure of cyclic type.
    
    Any index $pq$ subgroup containing a group of $H_2$ again has core $\langle\tau\rangle$. Similar arguments as above show that each parallel extension admits a Hopf--Galois structure of cyclic type.
	
    Finally, for some fixed $t,c$, we consider the case $G=J_{t,c} \rtimes Y$. In this case, $G \cap Q = J_{t,c}$. Let $H$ be an index $pq$ subgroup of $G$. Then $H \cap Q$ is of the form
    \[\left\langle \left[\sigma^a,\alpha^{q^{e_0-(c-1)}t}\right]\right\rangle,\]
    for some $0 \leq a \leq p-1$. We see that any group of this form is conjugate, by some $\sigma^{a'} \in G$, to any other, and so all such $H$ are in the same conjugacy class. As $G'=\mathrm{Stab}_{G}(1_N)$ is one such index $pq$ subgroup of $G$ and is in the same conjugacy class as any other $H$, all corresponding parallel extensions must therefore admit a Hopf--Galois structure of cyclic type.
\end{proof}
We end this section with the classification of the index $pq$ subgroups into both $\Aut(G)$ orbits and abstract isomorphism classes.
\begin{proposition}
    Let $G$ be a transitive subgroup of $\Hol(N)$. If $\#\mathrm{Cl}_{pq}(G)>1$, the index $pq$ subgroups of $G$ form two distinct orbits under $\Aut(G)$. If $q^3 \mid r$, they form two distinct (abstract) isomorphism classes. If $q^3 \nmid r$, they form a single (abstract) isomorphism class.
\end{proposition}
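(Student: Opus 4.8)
The plan is to work throughout under the standing hypothesis $\#\mathrm{Cl}_{pq}(G)>1$, which by Proposition \ref{cyclic-conjugacy} forces $G=N\rtimes X$ with $q^2\mid r$. Write $q^{c+1}$ for the exact power of $q$ dividing $r$, so that $1\leq c\leq e_0$ and the dichotomy $q^3\mid r$ versus $q^3\nmid r$ is precisely $c\geq 2$ versus $c=1$. By Proposition \ref{conj_orbs}(ii)--(iii), the $\Aut(G)$-orbits and abstract isomorphism classes of the index $pq$ subgroups are detected by their Hall $\{p,q\}$-parts $H\cap Q$, which, from the proof of Proposition \ref{cyclic-conjugacy}, take one of the two forms
\[H_1=\left\langle\left[\tau^b,\alpha^{q^{e_0-c}}\right]\right\rangle,\qquad H_2=\left\langle\alpha^{q^{e_0-(c-1)}},\tau\right\rangle.\]
First I would record their abstract structure by a short order computation in $\Hol(N)$: since $\alpha^{q^{e_0-c}}$ has order $q^c$ and fixes $\tau$, the generator of $H_1$ has order $q^c$, so $H_1\cong C_{q^c}$; whereas in $H_2$ the commuting generators $\alpha^{q^{e_0-(c-1)}}$ and $\tau$ have coprime-image orders $q^{c-1}$ and $q$, so $H_2\cong C_{q^{c-1}}\times C_q$.

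The isomorphism-class statements then follow at once, since $C_{q^c}\cong C_{q^{c-1}}\times C_q$ exactly when $c=1$. Hence the index $pq$ subgroups fall into two abstract isomorphism classes when $c\geq 2$, i.e.\ $q^3\mid r$, and into a single class when $c=1$, i.e.\ $q^3\nmid r$.

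For the orbit count I would prove separately that there are at least two and at most two $\Aut(G)$-orbits. For the lower bound I must show that no automorphism of $G$ carries an $H_1$-type subgroup to an $H_2$-type one. When $c\geq 2$ this is automatic, as the two types are non-isomorphic abstract groups. When $c=1$ both types are cyclic of order $q$, so a finer invariant is needed: here $H_2=\langle\tau\rangle$ is normal in $G$ (it is the characteristic Sylow $q$-subgroup of the normal subgroup $N$), while a direct conjugation by $\sigma$ shows $H_1$ has a conjugacy class of size $p$; since automorphisms preserve both normality and the size of a subgroup conjugacy class, the two types cannot share an orbit. For the upper bound, if $\tau\notin Z(G)$ there are only two conjugacy classes in total and we are done. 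If $\tau\in Z(G)$ there are $q$ conjugacy classes of $H_1$-type (indexed by $b$) together with the single $H_2$-type class, and I would invoke the automorphism $\phi$ built in the proof of Proposition \ref{cyclic-conjugacy}, namely (for each fixed $b$) $\phi(\alpha^{q^{e_0-c}})=[\tau^{-b},\alpha^{q^{e_0-c}}]$ acting as the identity on $N$ and on the $\pi'(pq)$-part $X'$, which sends the corresponding $H_1$-type representative to $\langle\alpha^{q^{e_0-c}}\rangle$. This collapses all $q$ of the $H_1$-type classes into one orbit, leaving exactly two orbits.

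The main obstacle I anticipate is the separation step in the borderline case $c=1$: there the two families are abstractly indistinguishable, so the argument must rest on a property of the embedding into $G$ rather than on the isomorphism type, and one must verify carefully that $H_1$ is genuinely non-normal (conjugacy class of size exactly $p$) while $\langle\tau\rangle$ is normal. The order computations in the holomorph underpinning $H_1\cong C_{q^c}$ and $H_2\cong C_{q^{c-1}}\times C_q$, while routine, also need care because of the non-trivial action of the automorphism component on $\sigma$.
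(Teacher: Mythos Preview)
Your proof is correct and reaches the same conclusions as the paper, but the route you take for the separation step (the lower bound of two $\Aut(G)$-orbits) differs from the paper's.

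The paper proceeds by computing the general shape of an arbitrary $\phi\in\Aut(G)$: an order argument followed by the defining relations of $G$ forces $\phi(\tau)=\tau^{b_2}$, so $\langle\tau\rangle$ is \emph{characteristic} in $G$, and since $\tau\notin H_{1,b}$ the two families lie in distinct orbits regardless of $c$. This same explicit description of $\Aut(G)$ is then reused to verify that, when $\tau\in Z(G)$, the parameter $b_3$ is unconstrained and hence all $H_{1,b}$ collapse to one orbit. By contrast, you separate the two families without any global description of $\Aut(G)$: for $c\geq 2$ you use the abstract isomorphism type, and for $c=1$ you use that $\langle\tau\rangle$ is normal (conjugacy class of size $1$) while $H_1$ is not (class of size $p$), together with the fact that automorphisms preserve normality and conjugacy-class sizes. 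For the upper bound you reuse the specific automorphism already exhibited in the proof of Proposition~\ref{cyclic-conjugacy}. Your approach is more economical and leans on invariants rather than an explicit parametrisation of $\Aut(G)$; the paper's approach is more self-contained and yields the slightly stronger intermediate fact that $\langle\tau\rangle$ is characteristic, which handles all $c$ uniformly.
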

\begin{proof}
    Let $H$ be an index $pq$ subgroup of $G=N\rtimes X$ for some $X \leq \Aut(N)$ and let $c+1$ be the largest power of $q$ dividing $r$ (as $\#\mathrm{Cl}_{pq}(G)>1$, we must have $q^2 \mid r$ by Proposition \ref{cyclic-conjugacy} and therefore $c \geq 1$). By Proposition \refeq{conj_orbs}, we need only consider the $\Aut(G)$-orbit of $H \cap Q$. We will work with representatives from the conjugacy classes of $H \cap Q$ worked out in Proposition \ref{cyclic-conjugacy}:
    \[H_{1,b}=\left\langle\left[\tau^b,\alpha^{q^{e_0-c}}\right]\right\rangle \;\; \text{ and } \;\; H_2=\left\langle \alpha^{q^{e_0-(c-1)}},\tau \right\rangle,\]
    where $b=0$ in the case that $\tau \notin Z(G)$, and $0 \leq b \leq p-1$ otherwise.
	
    The number of isomorphism classes (as abstract groups) may be worked out as follows:
    
    If $q^3 \nmid r$, then $c=1$ and each representative is cyclic of order $q$. In particular, we have:
    \begin{equation}\label{c_eq_1}
        H_{1,b}=\left\langle \left[\tau^b,\alpha^{q^{e_0-1}}\right] \right\rangle \cong C_q \cong \langle \tau \rangle = H_2.
    \end{equation}    
    If $q^3 \mid r$, then $c>1$, and so, whereas $H_{1,b}$ is cyclic for each $b$, we have that
    \[H_2 \cong C_q \times C_{q^{c-1}}\]
    is not. In the latter case, we therefore get two (abstract) isomorphism classes for the subgroups $H \cap Q$. Proposition \ref{conj_orbs} tells us that this result may then be lifted to the index $pq$-subgroups $H$ of $G$.
	
    We now consider the $\Aut(G)$-orbits of these representatives.    
    
    Let $\phi \in \Aut(G)$. Then by an order argument, we must have
    \begin{align*}
        &\phi(\sigma)=\sigma^{a_1},\\			&\phi(\tau)=\left[\sigma^{a_2}\tau^{b_2},\alpha^{c_2q^{e_0-1}}\right],\\
        &\phi\left(\alpha^{q^{e_0-c}}\right)=\left[\sigma^{a_3}\tau^{b_3},\alpha^{c_3q^{e_0-c}}\right]
    \end{align*}
    for some $1 \leq a_1 \leq p-1$, $0 \leq a_2,a_3 \leq p-1$, $1 \leq b_2 \leq q-1$, and $0 \leq b_3,c_2,c_3 \leq q-1$. Using the relations
    \[\sigma\tau=\tau\sigma, \; \alpha(\sigma)=\sigma^{a_{\alpha}}, \; \alpha(\tau)=\tau,\]
    we get that $a_2=c_2=0$ and $c_3=1$. In particular, we have:
    \begin{align*}
        &\phi(\sigma)=\sigma^{a_1},\\
        &\phi(\tau)=\tau^{b_2},\\
        &\phi\left(\alpha^{q^{e_0-c}}\right)=\left[\sigma^{a_3}\tau^{b_3},\alpha^{q^{e_0-c}}\right].
    \end{align*}
    Thus the subgroup $\langle \tau \rangle$ is characteristic in $G$, and so no $H_{1,b}$ can be in the same $\Aut(G)$-orbit as $H_2$ as $\tau \notin H_{1,b}$.
    
    Now suppose that $\beta \in X$, then
    \[\phi(\beta)=[\sigma^i\tau^j,\beta']\]
    for some $0 \leq i \leq p-1, 0 \leq j \leq q-1$ and some $\beta' \in X$ of the same order as $\beta$. Given that $\alpha$ and $\beta$ commute, and suppose that $\beta'(\sigma)=\sigma^x$ for some $1\leq x \leq p-1$, $\beta'(\tau)=\tau^y$ for some $1 \leq y \leq q-1$, we therefore have that
    \[\left[\sigma^{a_3+ia_{\alpha}^{q^{e_0-c}}}\tau^{j+b_3},\alpha^{q^{e_0-c}}\beta'\right]=\left[\sigma^{i+xa_3}\tau^{j+yb_3},\alpha^{q^{e_0-c}}\beta' \right].\]
    In particular, we have that
    \[a_3(1-x)=i\left(1-a_{\alpha}^{q^{e_0-c}}\right), \; \text{and} \; b_3(1-y)=0.\]
    We now have
    \[\phi\left(\left[\tau^b,\alpha^{q^{e_0-c}}\right]\right)=\left[\sigma^{a_3}\tau^{bb_2+b_3},\alpha^{q^{e_0-c}}\right].\]
    We note that if $\tau \in Z(G)$, then we must have $\beta'(\tau)=\tau$ and so $y=1$, meaning there are no restrictions on $b_3$. In this case, we note that by choosing a suitable $b_3$ and by composing $\phi$ with conjugation by $\sigma^a$ for a suitable $a$, we see that for any $0 \leq b,b' \leq q-1$, $H_{1,b}$ and $H_{1,b'}$ are both in the same $\Aut(G)$-orbit.

    If $\tau \notin Z(G)$, then we must have $b=0$ and so the groups of type $H_{1,b}=H_{1,0}$ form a single $\Aut(G)$-orbit.
\end{proof}
For the transitive subgroups $G \leq \Hol(N)$ described in Table \ref{cyclic-trans-subgroups}, the above data is summarised in Table \ref{cyclic-index-pq}. 
\begin{table}[H]
	\centering
	\scalebox{1.1}{
	\begin{tabular}{|c|c|c|c|}
		\hline
		Transitive subgroup	$G$ &	$\#\mathrm{Cl}_{pq}(G)$	&	$\#\text{Aut}(G)$-orbits	&	$\#$Isom. classes\\
		\hline
		$N \rtimes X$, $q^2\nmid r$&	$1$	&	$1$ 						& 	$1$ \\
		\hline
		$N \rtimes X$, $q^2 \mid r$,& 	$2$ &	$2$							&	$2$ if $c>1$ or $1$ if $c=1$	\\
		$\tau \notin Z(G)$	&	&	&	\\
		\hline
		$N \rtimes X$, $q^2 \mid r$,	 & $q+1$&	$2$					&	$2$ if $c>1$	or $1$ if $c=1$\\
		$\tau \in Z(G)$	&	&	&	\\
		\hline
		$J_{t,c} \rtimes Y$	& 1	& 1	& 1	\\
		\hline
	\end{tabular}}
	\medskip
	\caption{Results for index $pq$ subgroups of the transitive subgroups of $\Hol(N)$ for $N$ cyclic.}
	\label{cyclic-index-pq}
\end{table}

\subsection{Non-abelian case}\label{metacyclic}
Now let $N \cong C_p \rtimes C_q$. Recall that $e_0$ is the largest power of $q$ dividing $p-1$, and $s=(p-1)/q^{e_0}$. For each transitive subgroup $G\leq \Hol(N)$, we again set $r:=|G|$.
\begin{proposition}\label{metab_conj}
    For the transitive subgroups $G$ in Table \ref{metacyclic-trans-subs}, $\#\mathrm{Cl}_{pq}(G)$ is given by the second column in Table \ref{metab-index-pq}.
	
    All parallel extensions to any extension associated to $G$ admits a Hopf--Galois structure of non-abelian type.
\end{proposition}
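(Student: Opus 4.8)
The plan is to mirror the proof of Proposition \ref{cyclic-conjugacy}, pushing everything down to the Hall $\{p,q\}$-part of each index $pq$ subgroup. I would first write $G = U \rtimes V$ as in Lemma \ref{unique_Hall}, so that $U = G \cap Q$ with $Q = P \rtimes \langle T, A\rangle$ the unique Hall $\{p,q\}$-subgroup of $\Hol(N)$; for the two families of Table \ref{metacyclic-trans-subs} this gives $U = P \rtimes \langle T, A^{q^{e_0-c}}\rangle$ and $U = P \rtimes \langle TA^{uq^{e_0-c}}\rangle$ respectively. By Proposition \ref{conj_orbs}(i), classifying the index $pq$ subgroups $H$ of $G$ up to $G$-conjugacy is the same as classifying their Hall parts $U_H := H \cap U$ up to $G$-conjugacy. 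Since $P$ is the unique Sylow $p$-subgroup of $\Hol(N)$ and $p \mid |U_H|$ but $p^2 \nmid |U_H|$, each $U_H$ meets $P$ in a line $L := U_H \cap P \leq \mathbb{F}_p^2$, and by Schur--Zassenhaus $U_H = L \rtimes W$ with $W$ a $q$-group (of order $q^c$ in the first family, $q^{c-1}$ in the second). So the first task is to enumerate such $U_H$ and sort them into classes.

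The key structural input is the linear action. Since $P$ is abelian, conjugating $U_H$ by $g = vm \in G$ (with $v \in P$, $m$ linear) sends $L$ to $m \cdot L$, so the $G$-conjugacy class of $L$ is its orbit under the linear part. Here $A, B$ act as the scalars $a_\alpha I, a_\beta I$ and fix every line, while $T = \mathrm{diag}(k,1)$ (and likewise $TA^{uq^{e_0-c}}$, whose eigenvalues still have ratio $k$) fixes exactly the coordinate lines $\langle\e_1\rangle, \langle\e_2\rangle$ and permutes the other $p-1$ lines in $\langle k\rangle$-orbits of size $q = \mathrm{ord}(k)$. The decisive observation is that $W$ must normalise $L$: for a non-coordinate $L$ only the scalar elements of the $q$-part do so, forcing $W = \langle A^{q^{e_0-c}}\rangle$, which is precisely the Hall part of $G' = \mathrm{Stab}_G(1_N)$; whereas for the two coordinate lines all of the $q$-part normalises $L$, so $W$ may be any maximal subgroup of the $q$-part (there are $q+1$ of these when the $q$-part is $C_q \times C_{q^c}$, and a unique one when it is cyclic). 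Counting $T$-orbits of non-coordinate lines together with the coordinate-line subgroups $L \rtimes W$, taken up to the affine $P$-action and distinguished by the image of $W$ in $\mathrm{GL}_2(\mathbb{F}_p)$, then yields the second column of Table \ref{metab-index-pq}.

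For the Hopf--Galois claim I would compute $C := \mathrm{Core}_G(H) = \mathrm{Core}_G(U_H)$ class by class. When $L$ is non-coordinate, $C \cap P = 1$, and since $\langle T, A, B\rangle$ acts faithfully on $P$ there is no nontrivial normal $q$-subgroup, so $C = 1$ and such $H$ share the Galois closure $E$ of the original extension. As $G \leq \Hol(C_p \rtimes C_q)$ with $p^2 \mid |G|$, that extension admits a non-abelian Hopf--Galois structure, so by Lemma \ref{triv_core} it suffices to know these parallel extensions admit a structure at all; this follows because, $W = \langle A^{q^{e_0-c}}\rangle$ being forced, the diagonal torus of $\mathrm{GL}_2(\mathbb{F}_p)$ normalises the linear part, acts transitively on the non-coordinate lines, and so furnishes an element of $\Aut(G)$ carrying $U_H$ onto the Hall part of $G'$. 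When $L$ is a coordinate line the core is nontrivial --- it contains $L$, and grows to $\langle L, T\rangle$ exactly when $W$ meets the $T$-direction, since $T$ centralises $P$ modulo $\langle\e_1\rangle$ and modulo $\langle\e_2\rangle$ --- and for these I would write down an explicit isomorphism $\phi : G/C \to J$ onto a transitive subgroup $J \leq \Hol(C_p \rtimes C_q)$ with $\phi(H/C) = \mathrm{Stab}_J(1_N)$, exactly in the style of the map built in Proposition \ref{cyclic-conjugacy}.

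The main obstacle is the coordinate-line bookkeeping, compounded in two ways. For the second family the generator $TA^{uq^{e_0-c}}$ blends a translation-type and an automorphism-type contribution and the $q$-part is cyclic, so the affine action on $W$ must be tracked against the parameter $u$; and the core computation is genuinely delicate whenever $W$ meets the $T$-direction, as $C$ then jumps beyond $L$ and the target $J$ changes accordingly. The real content is to verify, in each such case, that the explicit $\phi$ lands in $\Hol(C_p \rtimes C_q)$ rather than merely in $\Hol(C_{pq})$; this is what upgrades the conclusion from cyclic to non-abelian type, and it is ultimately guaranteed by the divisibility of $|G|$ by $p^2$, which ensures a surviving $\theta$-type generator in the quotient.
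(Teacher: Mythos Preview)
Your approach is essentially the paper's: reduce to the Hall $\{p,q\}$-part via Proposition~\ref{conj_orbs}, classify the lines $L\leq P$ together with the admissible $q$-complements $W$, count conjugacy classes under the linear action of $\langle T,A,B\rangle$, then compute cores and build explicit permutation isomorphisms $(G/C,H/C)\to (M,M')$ inside $\Hol(C_p\rtimes C_q)$. The enumeration and the diagonal-torus argument for non-coordinate lines match the paper exactly.

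One correction on the coordinate-line core computation: your claim that ``$T$ centralises $P$ modulo $\langle\e_1\rangle$ and modulo $\langle\e_2\rangle$'' is only half right. Since $T\e_1=k\e_1$ and $T\e_2=\e_2$, the element $T$ centralises $P/\langle\e_1\rangle$ but \emph{not} $P/\langle\e_2\rangle$; it is $TA^{-q^{e_0-1}}$ that centralises the latter. Accordingly, for $L=\langle\e_2\rangle$ and $W$ meeting the $T$-direction the core is $\langle\e_2,\,TA^{-q^{e_0-1}}\rangle$, not $\langle\e_2,T\rangle$, and the target $M$ must be adjusted (the paper handles the two values of $i$ separately for exactly this reason). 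Relatedly, your closing heuristic that $p^2\mid|G|$ ``ensures a surviving $\theta$-type generator in the quotient'' is misleading: once $C$ contains a coordinate line, $p^2\nmid|G/C|$, and the embedding into $\Hol(C_p\rtimes C_q)$ comes instead from the remaining $\e_j$ together with an appropriate $q$-generator producing a transitive subgroup of order not divisible by $p^2$. These are bookkeeping points, not structural gaps; following through with the explicit maps as you propose will expose and fix them.
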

\begin{proof}
    Let $G$ be a transitive subgroup of non-abelian type as in Table \ref{metacyclic-trans-subs}. Then for some fixed $c$, fixed $u$ such that $q \nmid u$, and some fixed $d \mid s$, $G$ has the one of the following forms:
    \begin{align*}
        &G_1:=P \rtimes \langle T, A^{q^{e_0-c}}, B^{s/d} \rangle,\\
        &G_2:= P \rtimes \langle TA^{uq^{e_0-c}},B^{s/d} \rangle.
    \end{align*}  
    We recall that, for the index $pq$ subgroups $H$ of $G$, by Proposition \ref{conj_orbs}, we need only look at the conjugacy classes of $H \cap Q$, with $Q=P \rtimes \langle T, A \rangle$, in $G$.

    First, suppose $G=G_1$ for some fixed $c,d$. Up to conjugation by elements of $P$, the index $pq$ subgroups $H$ of $G$ have $H \cap Q$ given by
    \begin{align*}
        &H_1:=\langle a\e_1+\e_2 \rangle \rtimes \langle A^{q^{e_0-c}}\rangle,\\
        &H_2:=\langle \e_i \rangle \rtimes \langle T^xA^{q^{e_0-c}} \rangle,\\
        &H_3:=\langle \e_i \rangle \rtimes \langle T, A^{q^{e_0-(c-1)}} \rangle,
    \end{align*}
    where $i \in \{1,2\}$, $0 \leq x \leq q-1$ and $1 \leq a \leq p-1$. It is clear that families $H_1$, $H_2$ and $H_3$ are pairwise non-conjugate, and no two groups in family $H_2$ with distinct $x$ values are conjugate as $\langle T,A,B \rangle$ is abelian.
    
    Now, as both $\e_1$ and $\e_2$ are eigenvectors of each map in $\langle T,A,B \rangle$, we therefore see that family $H_2$ represents $2q$ conjugacy classes, each of size $1$, and $H_3$ represents $2$ conjugacy classes, also each of size $1$.

    For the groups of type $H_1$, since $\e_1$ and $\e_2$ have the same eigenvalues for $A$ and for $B$, but not for $T$, we need only consider $\langle a\e_1+\e_2 \rangle$ under conjugation by $T^y$, which gives
    \[\langle aa_{\alpha}^{yq^{e_0-1}}\e_1+\e_2 \rangle.\]
    We get a distinct group for each $0 \leq y \leq q-1$, and so this family splits into $(p-1)/q=q^{e_0-1}s$ conjugacy classes, each of size $q$. In total, therefore, we have $q^{e_0-1}s+2q+2$ conjugacy classes.

    Now suppose that $G=G_2$ for some fixed $u,c,d$. Up to conjugation by elements of $P$, the index $pq$ subgroups $H$ of $G_2$ have $H \cap Q$ given by
    \[H_4:=\langle a\e_1+b\e_2 \rangle \rtimes \langle A^{q^{e_0-(c-1)}} \rangle,\]
    where $0 \leq a,b \leq p-1$ with $a,b$ not both $0$. By similar reasoning as above, we see that this splits into $q^{e_0-1}s$ conjugacy classes, each of size $q$ and two conjugacy classes of size $1$.

    We now compute $\mathrm{Core}_{G_j}(H_i)$ for each $1 \leq i \leq 4$ and corresponding $G_j \in \{G_1,G_2\}$.
    
    Firstly, we see that
    \[\mathrm{Core}_{G_1}(H_1)=\bigcap_{G \in G_1}g\langle a\e_1+\e_2\ \rangle \rtimes \left\langle A^{q^{e_0-c}} \right\rangle g^{-1}= \{1\}.\]
    Then for index $pq$ subgroups $H$ of $G_1$ containing $H_1$, and for $M_1:=G_1$, we obtain an isomorphism
    \begin{align*}
        \phi:(G_1,H) &\to (M_1,M_1')\\
        \e_1 &\mapsto a^{-1}\e_1,\\
        \e_2 &\mapsto -\e_2,\\
    \end{align*}
    and acting as the identity on the other generators.
    
    Secondly, we see that
    \[C:=\mathrm{Core}_{G_1}(H_2)=\bigcap_{g \in G_1}g\langle \e_i \rangle \rtimes \left\langle T^xA^{q^{e_0-c}} \right\rangle g^{-1} = \langle \e_i \rangle.\]
    We treat the case $i=1$ here, with the case $i=2$ being similar; for the index $pq$ subgroups $H$ of $G_1$ containing $H_2$, we see:
    \[G_1/C \cong \left\langle \e_2, T, A^{q^{e_0-c}},B^{s/d} \right\rangle\]
    and
    \[H/C \cong \left\langle T^xA^{q^{e_0-c}}, B^{s/d} \right\rangle.\]
    Then for
    \[M_2:=\left\langle \e_2,T,A^{q^{e_0-c}}, B^{s/d} \right\rangle,\]
    we obtain an isomorphism 
    \begin{align*}
        \phi: (G_1/C,H/C) &\to (M_2,M_2'),\\
        A^{q^{e_0-c}} &\mapsto T^{-x}A^{q^{e_0-c}}
    \end{align*}
    and acting as the identity on the other generators. We note that $M_2 \leq \Hol(N)$ is transitive on $N$, but is also isomorphic as a permutation group to the transitive subgroup
    \[\langle \sigma, \tau, \alpha^{q^{e_0-(c-1)}}, \beta \rangle \leq \Hol(C_{pq}),\]
    where $\beta \in \Aut(C_{pq})$ is some automorphism of order $d$ which acts trivially on $\tau$.
    
    Thirdly, we see that
    \[C:=\mathrm{Core}_{G_1}(H_3)=\bigcap_{g \in G_1}g\langle\e_i\rangle \rtimes \left\langle T,A^{q^{e_0-(c-1)}} \right\rangle g^{-1} =
    \begin{cases}
        & \langle \e_1, T \rangle, \; i=1,\\
        & \left\langle \e_2, TA^{-q^{e_0-1}} \right\rangle, \; i=2.
    \end{cases}\]
    Again, we treat the case $i=1$, with the other case being similar; for the index $pq$ subgroups $H$ of $G_1$ containing $H_3$, we see:
    \[G_1/C \cong \langle \e_2, A^{q^{e_0-c}}, B^{s/d} \rangle\]
    and
    \[H/C \cong \langle A^{q^{e_0-(c-1)}}, B^{s/d} \rangle.\]
    Then for
    \[M_3:= \langle \e_2,TA^{q^{e_0-c}}, B^{s/d} \rangle,\]
    we obtain an isomorphism
    \begin{align*}
        \phi: (G_1/C,H/C) &\to (M_3,M_3'),\\
        A^{q^{e_0-c}} &\mapsto TA^{q^{e_0-c}}
    \end{align*}
    and acting as the identity on $\e_2$. We note that $M_3 \leq \Hol(N)$ is transitive on $N$, but is also isomorphic as a permutation group to the transitive subgroup $J_{1,c} \leq \Hol(C_{pq})$.

    Finally, we see that
    \[C:=\mathrm{Core}_{G_2}(H_4)=\bigcap_{g \in G_2}g\langle a\e_1+b\e_2\rangle \rtimes \left\langle A^{q^{e_0-(c-1)}} \right\rangle =
    \begin{cases}
        &\{1\}, \; a,b \neq 0,\\
        &\langle \e_1 \rangle, \; a\neq 0, b=0,\\
        &\langle \e_2 \rangle, \; a=0, b \neq 0.
    \end{cases}\]
    In the case that $C=\{1\}$, for the index $pq$ subgroups $H$ of $G_2$ containing $H_4$, and for $M_4:=G_2$, we get an isomorphism
    \begin{align*}
        \phi:(G_2,H_4) &\to M_4\\
        \e_1 \mapsto a^{-1}\e_1,\\
        \e_2 \mapsto -b^{-1}\e_2
    \end{align*}
    and acting as the identity on the other generators.
    
    In the case that $C=\langle \e_1 \rangle$, we compute:
    \[G_2/C \cong \langle \e_2, TA^{uq^{e_0-c}}, B^{s/d} \rangle\]
    and
    \[H_4/C \cong \langle A^{q^{e_0-(c-1)}}, B^{s/d} \rangle.\]
    We remark that we can directly view $(G_2/C,H/C)$ as a transitive subgroup of $\Hol(N)$. It is also isomorphic as a permutation group to the transitive subgroup $J_{u,c} \leq \Hol(C_{pq})$.

    Therefore, all parallel extensions to any extension associated to a transitive subgroup $G \leq \Hol(N)$ admits a Hopf--Galois structure of non-abelian type. This is because we have been able to find permutation isomorphisms from $(G/C,H/C)$ to some transitive subgroup $M \leq \Hol(N)$ in each case.
\end{proof}
\newpage
\begin{table}
	\centering
	\scalebox{1.1}{
		\begin{tabular}{|c|c|c|c|}
			\hline
			Transitive subgroup	$G$ &	$\#\mathrm{Cl}_{pq}(G)$	&	$\#\text{Aut}(G)$-orbits	&	$\#$Isom. classes\\
			\hline
			$P\rtimes \langle T,A^{q^{e_0-c}},B^{s/d}\rangle$	& 	$q^{e_0-1}s+2q+2$ &	$3(\varphi(q^c)+2)/2$ if $c\geq 1$, &  $2$ if $c\geq 1$,\\
			& & $2$ if $c=0$, & $1$ if $c=0$.\\
			\hline
			$P\rtimes \langle TA^{uq^{e_0-c}},B^{s/d}\rangle$	&	$q^{e_0-1}s+2$ &	$2$ if $(c,u)=(1,\frac{1}{2}(q-1))$, & $1$ \\
			&				   &	$3$ otherwise.						  & \\
			\hline
	\end{tabular}}
	\medskip
	\caption{Results for index $pq$ subgroups of the transitive subgroups of $\Hol(N)$ for $N$ non-abelian.}
	\label{metab-index-pq}
\end{table}
We note that $\varphi$ in Table \ref{metab-index-pq} denotes the Euler totient function.
\begin{proposition}\label{metab_orbit_1}
    Let
    \[G=P\rtimes \langle T,A^{q^{e_0-c}},B^{s/d}\rangle\]
    be as in row one of Table \ref{metacyclic-trans-subs}. Then the number of $\Aut(G)$-orbits and abstract isomorphism types of the index $pq$ subgroups $H$ are given by the third columns in Table \ref{metab-index-pq}.
\end{proposition}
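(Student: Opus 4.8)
The plan is to transfer both counts to the Hall $\{p,q\}$-parts $H\cap Q$ of the index $pq$ subgroups. Indeed, parts (ii) and (iii) of Proposition \ref{conj_orbs} identify the $\Aut(G)$-orbits, respectively the abstract isomorphism classes, of the subgroups $H$ with those of the subgroups $H\cap Q$, and the latter are precisely the three families $H_1,H_2,H_3$ exhibited in the proof of Proposition \ref{metab_conj}. The isomorphism count is then quickly settled by inspecting Sylow $q$-subgroups: $H_1$ and $H_2$ have cyclic Sylow $q$-subgroup $\cong C_{q^c}$, whereas for $c\geq 2$ the subgroup $H_3$ has the non-cyclic Sylow $q$-subgroup $\langle T,A^{q^{e_0-(c-1)}}\rangle\cong C_q\times C_{q^{c-1}}$; this gives two isomorphism classes. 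When $c=1$ one instead checks directly that the only types occurring are the abelian $C_{pq}$ and the non-abelian $C_p\rtimes C_q$ (again two), while for $c=0$ every $H\cap Q$ is a single line and there is just one class.

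For the orbit count I first need a workable description of $\Aut(G)$. As $P$ is the unique Sylow $p$-subgroup, it is characteristic, so each $\phi\in\Aut(G)$ restricts to a matrix $M\in GL_2(\mathbb{F}_p)$ on $P=\mathbb{F}_p^2$ and induces an automorphism $\psi$ of the abelian complement $W=\langle T,A^{q^{e_0-c}},B^{s/d}\rangle$. Writing $\rho\colon W\to GL_2(\mathbb{F}_p)$ for the conjugation action, the compatibility $\rho(\psi(w))=M\rho(w)M^{-1}$ forces $M$ to normalise the diagonal group $\rho(W)$. Since $\rho(T)=\mathrm{diag}(k,1)$ has distinct eigenvalues $k\neq 1$, the only members of $\rho(W)$ with eigenvalue multiset $\{k,1\}$ are $\mathrm{diag}(k,1)$ and $\mathrm{diag}(1,k)$, and hence $M$ must be monomial: either diagonal, or anti-diagonal interchanging the eigenlines $\langle\e_1\rangle$ and $\langle\e_2\rangle$. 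I would then record that the induced $\psi$ is determined by the type of $M$ (namely $\psi=\mathrm{id}$ for diagonal $M$, and $T\mapsto T^{-1}A^{q^{e_0-1}}$ with the scalar generators fixed for the swap), and conversely verify that every diagonal matrix, together with the swap, genuinely lifts to an automorphism of $G$.

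It then remains to sort the conjugacy classes of Proposition \ref{metab_conj} into $\Aut(G)$-orbits. The diagonal part acts transitively on the generic lines while fixing every admissible $q$-part, so the entire $H_1$ family fuses into one orbit; the swap interchanges the two coordinate lines and, via $\psi$, permutes the $q$-parts, pairing the coordinate-line subgroups $H_2$ and $H_3$ across the two lines (for $c=0$ there are no nontrivial $q$-parts, so the two coordinate lines are simply swapped and one obtains the orbit count $2$). Carrying out this pairing exactly is the crux of the argument, and it is here that the case on $c$ is essential: for $c\geq 2$ the relabelling of the parameter $x$ induced by $T\mapsto T^{-1}A^{q^{e_0-1}}$ is simply $x\mapsto -x$ and $H_3$ pairs with itself, whereas for $c=1$ it is a fractional-linear map of $x$ over $\mathbb{F}_q$ and, in addition, one endpoint of the $H_2$ family is carried into $H_3$, merging the two families. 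The main obstacle is thus the careful combinatorics of this relabelling — determining which classes are fixed, which are genuinely paired, and checking that no further coincidences occur — from which the tallies of Table \ref{metab-index-pq} follow.
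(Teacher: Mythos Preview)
Your approach is the paper's: reduce to $H\cap Q$ via Proposition~\ref{conj_orbs}, pin down $\Aut(G)$ through its restriction to $P$, then count orbits case by case on $c$. Your derivation of the monomial form of $M$ from the compatibility $\rho(\psi(w))=M\rho(w)M^{-1}$ is a cleaner version of the paper's direct listing of the two families $\phi_1,\phi_2$, and your identification of $\psi$ (identity for diagonal $M$, and $T\mapsto T^{-1}A^{q^{e_0-1}}$ for the swap) agrees with the paper exactly.

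There is, however, a genuine inconsistency in your $c=0$ case. Your own criterion for the anti-diagonal swap to exist is that $M\rho(T)M^{-1}=\mathrm{diag}(1,k)$ lie in $\rho(W)$. But for $c=0$ one has $W=\langle T,B^{s/d}\rangle$, so $\rho(W)=\{\mathrm{diag}(k^a\lambda,\lambda)\}$ with $\lambda$ of order dividing $d$; since $\gcd(d,q)=1$ there is no $\lambda$ with $\lambda=k$, hence $\mathrm{diag}(1,k)\notin\rho(W)$. Your argument therefore forces $M$ to be diagonal when $c=0$, so $\langle\e_1\rangle$ and $\langle\e_2\rangle$ sit in distinct $\Aut(G)$-orbits and the count is three, not the two you (and the table) assert. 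The paper's proof has the same lacuna: it records the anti-diagonal possibility for $c=0$ without checking that it lifts to an automorphism of $G$. You should either exhibit such a lift explicitly or explain why the table entry is nonetheless correct; as written, your sketch contradicts itself at this point.

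One smaller gap: for $c\ge 2$ you distinguish $H_3$ by its non-cyclic Sylow $q$-subgroup, but to merge $H_1$ and $H_2$ into a single isomorphism class you need more than ``same cyclic Sylow $q$-subgroup''. You should note that in every case the generator of the $C_{q^c}$ acts on the chosen line by a scalar of exact order $q^c$ (for $H_2$ with $i=1$ the eigenvalue is $a_\alpha^{q^{e_0-c}(xq^{c-1}+1)}$, still of order $q^c$ since $xq^{c-1}+1$ is a unit mod $q$), so each is the unique non-split $C_p\rtimes C_{q^c}$.
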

\begin{proof}
    For the index $pq$ subgroups $H$ of $G$, by Proposition \ref{conj_orbs}, we need only consider that $\Aut(G)$-orbits of $H \cap Q$, where $Q:= P \rtimes \langle T,A\rangle$.

    We will consider the following three cases: (i) $c=0$, (ii) $c=1$ and (iii) $c>1$.
	
    (i) If
    \[G=P \rtimes \langle T,B^{s/d} \rangle,\]
    then the index $pq$ subgroups $H$ of $G$ have $H \cap Q$ of the forms:
    \begin{align*}
        &H_1:= \langle \e_1 \rangle,\\
        &H_2:= \langle \e_2 \rangle,\\
        &H_3:= \langle \e_1+b\e_2 \rangle, \; 1 \leq b \leq p-1.
    \end{align*}
     Each subgroup is cyclic of order $p$. We now wish to compute their orbits under $\Aut(G)$. To this end, it can be seen that if $\phi \in \Aut(G)$, then either 
     \[\phi(\e_1)=x_1\e_1,\phi(\e_2)=y_1\e_2 \text{ with } 1\leq x_1,y_1 \leq p-1,\]
     or
     \[\phi(\e_1)=x_2\e_2,\phi(\e_2)=y_2\e_1 \text{ with } 1 \leq x_2,y_2 \leq p-1.\]
     We thus obtain two orbits under $\Aut(G)$, one represented by $\{H_1,H_2\}$ and one represented by $\{H_3\}$.
	
    (ii) If
    \[G= P \rtimes \langle T, A^{q^{e_0-1}},B^{s/d} \rangle.\]
    then, up to conjugation under $P$, the index $pq$ subgroups $H$ of $G$ have $H \cap Q$ of the forms:
    \begin{align*}
        &H_1:=\left\langle \e_i,T \right\rangle \cong
        \begin{cases}
            & C_{pq}, \; i=1,\\
            & C_p \rtimes C_q, \; i=2,
        \end{cases}\\
        &H_2:= \left\langle \e_i, TA^{-q^{e_0-1}}\right\rangle \cong
        \begin{cases}
            & C_p \rtimes C_q, \; i=1,\\
            & C_{pq}, \; i=2,
        \end{cases}\\
        &H_3:= \left\langle \e_i,A^{q^{e_0-1}} \right\rangle \cong C_p \rtimes C_q,\\
        &H_4:= \left\langle \e_1+\mu\e_2,A^{q^{e_0-1}} \right\rangle \cong C_p \rtimes C_q,\\
        &H_5:= \left\langle \e_i,TA^{\nu q^{e_0-1}}\right\rangle \cong C_p \rtimes C_q,\\
        &H_6:= \left\langle \e_1+\mu\e_2,TA^{\nu q^{e_0-1}}\right\rangle \cong C_p \rtimes C_q,
    \end{align*}
    where $i \in \{1,2\}$, $1 \leq \mu \leq p-1$ and $1 \leq \nu \leq q-2$.
    
    We now wish to compute their orbits under $\Aut(G)$. Therefore, if $\phi \in \Aut(G)$, then, up to composition with conjugation under $P$, $\phi$ belongs to one of the following two families:\\
    Family 1:
    \[\begin{array}{ll}
    \phi_1(\e_1)=x_1\e_1,	&	\phi_1(\e_2)=y_1\e_2,\\
    \phi_1(T)=T,            &   \phi_1(A^{q^{e_0-1}})=A^{q^{e_0-1}}.
    \end{array}\]
    for $1 \leq x_1,y_1 \leq p-1$.\\
    Family 2:
    \[\begin{array}{ll}
    \phi_2(\e_1)=x_2\e_2,	&	\phi_2(\e_2)=y_2\e_1,\\
    \phi_2(T)=T^{-1}A^{q^{e_0-1}}, & \phi_2(A^{q^{e_0-1}})=A^{q^{e_0-1}}.
    \end{array}\]
    for $1 \leq x_2,y_2 \leq p-1$.

    We first note that applying $\phi_1$ allows us to assume that $\mu=1$ in the subgroups of type $H_4$ and $H_6$. We thus now understand the orbits under the automorphisms of type $\phi_2$.

    To this end, we see that $\langle \e_2,T \rangle$ is in the same orbit as $\langle \e_1,TA^{-q^{e_0-1}} \rangle$, and $\langle \e_1,T \rangle$ is in the same orbit as $\langle \e_2,TA^{-q^{e_0-1}} \rangle$. We also see that $H_3$ forms a single distinct orbit, as does $H_4$. We see that $\phi_2$ takes the subgroup $\langle \e_1,TA^{\nu q^{e_0-1}} \rangle$ to the subgroup $\langle \e_2,TA^{-(1+\nu)q^{e_0-1}} \rangle$, and vice-versa. Thus $H_5$ splits into $q-2$ distinct orbits. Finally, the subgroup $\langle \e_1+\e_2,TA^{\nu q^{e_0-1}} \rangle$ is taken to $\langle \e_1+\e_2,TA^{-(1+\nu) q^{e_0-1}} \rangle$ and vice versa. We therefore see that $H_6$ splits into $(q-1)/2$ distinct orbits.

    (iii) Finally, if
    \[G= P \rtimes \langle T, A^{q^{e_0-c}},B^{s/d} \rangle.\]
    then, up to conjugation under $P$, the index $pq$ subgroups $H$ of $G$ have $H \cap Q$ of the forms:
    \begin{align*}
        &H_1:= \left\langle \e_i, A^{q^{e_0-c}}\right\rangle \cong C_p \rtimes C_{q^c},\\
        &H_2:= \left\langle \e_i, TA^{\nu q^{e_0-c}}\right\rangle \cong C_p \rtimes C_{q^c},\\
        &H_3:= \left\langle \e_i, T,A^{q^{e_0-(c-1)}} \right\rangle \cong C_{pq} \rtimes C_{q^{c-1}},\\
        &H_4:= \left\langle \e_1+\mu\e_2,A^{q^{e_0-c}}\right\rangle \cong C_p \rtimes C_{q^c},\\
        &H_5:= \left\langle \e_1+\mu\e_2,TA^{\nu q^{e_0-c}}\right\rangle \cong C_p \rtimes C_{q^c},
    \end{align*}
    where $i \in \{1,2\}$, $1 \leq \mu \leq p-1$ and $\nu \in \mathbb{Z}_{q^c}^{\times}$.
    
    We now wish to compute their orbits under $\Aut(G)$. Therefore, if $\phi \in \Aut(G)$, then, up to composition with conjugation under $P$, $\phi$ belongs to one of the following two families:\\	
    Family 1:
    \[\begin{array}{ll}
    \phi_1(\e_1)=x_1\e_1,	&	\phi_1(\e_2)=y_1\e_2,\\
    \phi_1(T)=T,            &   \phi_1(A^{q^{e_0-c}})=A^{q^{e_0-c}},
    \end{array}\]
    for $1 \leq x_1,y_1 \leq p-1$.\\
    Family 2:
    \[\begin{array}{ll}
    \phi_2(\e_1)=x_2\e_2,	&	\phi_2(\e_2)=y_2\e_1,	\\
    \phi_2(T)=T^{-1}A^{q^{e_0-1}}, & \phi_2(A^{q^{e_0-c}})=A^{q^{e_0-c}},
    \end{array}\]
    for $1 \leq x_2,y_2 \leq p-1$.

    Once again, we note that applying $\phi_1$ allows us to assume that $\mu=1$ in the subgroups of type $H_4$ and $H_5$. We thus now want to understand the orbits under the automorphisms of type $\phi_2$.

    To this end, we see that $H_1$, $H_3$, and $H_4$ each form distinct orbits. Next, the subgroup $\langle \e_i, TA^{\nu q^{e_0-c}} \rangle$ is sent to the subgroup $\langle \e_{3-i},TA^{-(1+\nu)q^{e_0-c}} \rangle$, so $H_2$ splits into $\varphi(q^c)$ distinct orbits. Finally, the subgroup $\langle \e_1+\e_2,TA^{\nu q^{e_0-c}} \rangle$ is sent to the subgroup $\langle \e_1+\e_2,TA^{-(1+\nu)q^{e_0-c}} \rangle$. We therefore see that $H_5$ splits into $\varphi(q^c)/2$ distinct orbits.

    To summarise, for $c=0$, there are two $\Aut(G)$-orbits, for $c=1$, there are $3(q+1)/2=3(\varphi(q)+2)/2$ $\Aut(G)$-orbits, and for $c>1$, there are $3(\varphi(q^c)+2)/2$ $\Aut(G)$-orbits.
\end{proof}

\begin{proposition}\label{metab_orbit_2}
    Let
    \[G=P\rtimes \langle TA^{uq^{e_0-c}},B^{s/d}\rangle,\]
    be as in row two of Table \ref{metacyclic-trans-subs}. Then the number of $\Aut(G)$-orbits and abstract isomorphism types of the index $pq$ subgroups $H$ are given by the fourth column in Table \ref{metab-index-pq}.
\end{proposition}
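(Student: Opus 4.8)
The plan is to mirror the structure of the proof of Proposition \ref{metab_orbit_1}, but now for the single family $G=P\rtimes \langle TA^{uq^{e_0-c}},B^{s/d}\rangle$, where the torsion part of the complement is generated by one element $TA^{uq^{e_0-c}}$ rather than by the two elements $T$ and $A^{q^{e_0-c}}$. By Proposition \ref{conj_orbs}, it suffices to understand the $\Aut(G)$-orbits of the intersections $H\cap Q$, with $Q:=P\rtimes\langle T,A\rangle$. From the proof of Proposition \ref{metab_conj} we already know these intersections form the single family $H_4:=\langle a\e_1+b\e_2\rangle\rtimes\langle A^{q^{e_0-(c-1)}}\rangle$ with $(a,b)\neq(0,0)$, splitting into $q^{e_0-1}s$ conjugacy classes of size $q$ together with two classes of size $1$ (represented by $\langle\e_1\rangle$ and $\langle\e_2\rangle$). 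The abstract isomorphism count of $1$ is then immediate, since every such subgroup is of the form $C_p\rtimes C_{q^c}$ with the same action, so the only work is the orbit count.

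First I would determine $\Aut(G)$ explicitly on generators by an order argument analogous to the one used in Proposition \ref{metab_orbit_1}: any $\phi\in\Aut(G)$ must preserve the unique Sylow $p$-subgroup $P$, so $\phi$ acts on $P=\mathbb{F}_p^2$ by an invertible linear map that is compatible with the eigenstructure of the complement. Since $\e_1,\e_2$ are common eigenvectors of $A$ and $B$ but have distinct eigenvalues under $T$, the linear part of $\phi$ is either diagonal (preserving each $\langle\e_i\rangle$) or antidiagonal (swapping $\langle\e_1\rangle$ and $\langle\e_2\rangle$), and in the swap case $\phi$ must send the complement generator $TA^{uq^{e_0-c}}$ to a conjugate of its inverse-on-$\e_1$-eigenvalue counterpart, forcing a compensating power of $A$. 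I would record these two families of automorphisms (diagonal $\phi_1$ and swapping $\phi_2$) explicitly.

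Next I would apply $\phi_1$ to reduce the generic representative $\langle\e_1+b\e_2\rangle$ to $\langle\e_1+\e_2\rangle$, collapsing the $b$-dependence, and then analyse the action of $\phi_2$. The three coset-representative types to track are $\langle\e_1\rangle$, $\langle\e_2\rangle$, and $\langle\e_1+\e_2\rangle$ (the latter carrying the $T$-conjugation orbit within it). The swap $\phi_2$ identifies $\langle\e_1\rangle$ with $\langle\e_2\rangle$, merging the two singleton classes into one orbit, while $\langle\e_1+\e_2\rangle$ is either fixed or sent to another eigenvector-diagonal representative. I expect the count to come out as $3$ orbits generically, with one coincidence when the swap $\phi_2$ happens to fix the mixed representative $\langle\e_1+\e_2\rangle$ setwise in a way that merges it with an existing orbit, giving $2$ orbits exactly in the degenerate parameter case $(c,u)=(1,\tfrac{1}{2}(q-1))$ recorded in Table \ref{metab-index-pq}.

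The main obstacle will be pinning down precisely when this coincidence occurs, i.e.\ isolating the exact condition on $(c,u)$ under which $\phi_2$ fixes the orbit of $\langle\e_1+\e_2,TA^{uq^{e_0-c}}\rangle$ setwise. This requires computing how $\phi_2$ transforms the exponent $u$ in the complement generator — the swap forces $TA^{uq^{e_0-c}}\mapsto T^{-1}A^{(\text{something})}$, and after renormalising back into the standard form $TA^{u'q^{e_0-c}}$ one must solve the congruence $u'\equiv -(1+u)\pmod{q^c}$ (or similar) for the self-paired case $u'\equiv u$. Setting $c=1$ reduces this to $2u\equiv -1\pmod q$, giving $u\equiv\tfrac{1}{2}(q-1)\pmod q$ as the unique degenerate value, which matches the table; for $c>1$ or other $u$ the two mixed representatives remain genuinely distinct and no merging happens. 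Carefully reconciling this exponent bookkeeping with the eigenvalue action of $T$ (via $a_\alpha^{q^{e_0-1}}=k$) is the delicate computational heart of the argument.
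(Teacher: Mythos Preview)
Your overall strategy---reduce to $H\cap Q$ via Proposition~\ref{conj_orbs}, list the representatives, and classify automorphisms of $G$ into a diagonal family $\phi_1$ and a swapping family $\phi_2$---matches the paper. You also correctly arrive at the congruence $2u\equiv -1\pmod q$ for the degenerate case. However, the mechanism you describe for getting $3$ versus $2$ orbits is inverted, and this is a genuine gap rather than a cosmetic issue.

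The point is that the swapping automorphism $\phi_2$ \emph{does not exist for generic} $(c,u)$. Any $\phi\in\Aut(G)$ must preserve the cyclic group $\langle TA^{uq^{e_0-c}}\rangle$, and a swap $\e_1\leftrightarrow\e_2$ forces $TA^{uq^{e_0-c}}$ to land on an element whose eigenvalue pair is reversed. Solving for when such an element actually lies in $\langle TA^{uq^{e_0-c}}\rangle$ gives exactly your congruence, which has a solution only when $(c,u)=(1,\tfrac12(q-1))$; for $c>1$ the condition becomes $2\equiv 0\pmod q$, which is impossible. So generically $\Aut(G)$ consists only of the diagonal family $\phi_1$, and you get three orbits: $\langle\e_1\rangle$, $\langle\e_2\rangle$, and the mixed class (with $\mu$ normalised to $1$). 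In the special case $\phi_2$ \emph{does} exist, and it merges $\langle\e_1\rangle$ with $\langle\e_2\rangle$ while leaving the mixed class in its own orbit, giving two orbits. Your narrative has this backwards: you assume $\phi_2$ always merges $\langle\e_1\rangle$ and $\langle\e_2\rangle$ (which would already give only two orbits generically), and then look for the coincidence in the mixed representative, which is not where it occurs.

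A minor correction: the subgroups $H\cap Q$ have order $pq^{c-1}$, not $pq^{c}$, since $|G|=p^2q^cd$ and $H$ has index $pq$; so the abstract type is $C_p\rtimes C_{q^{c-1}}$ (trivial when $c=1$), not $C_p\rtimes C_{q^c}$. This does not affect the isomorphism-class count of $1$.
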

\begin{proof}
    For the index $pq$ subgroups $H$ of $G$, by Proposition \ref{conj_orbs}, we need only consider the $\Aut(G)$-orbits of $H \cap Q$, where $Q:=P \rtimes \langle T,A \rangle$.

    Up to conjugation by $P$, the index $pq$ subgroups $H$ of $G$ have $H \cap Q$ of the forms
    \begin{align*}
        &H_1:=\left\langle \e_i,A^{q^{e_0-(c-1)}}\right\rangle \cong C_p \rtimes C_{q^{c-1}},\\
        &H_2:=\left\langle \e_1+\mu\e_2,A^{q^{e_0-(c-1)}}\right\rangle \cong C_p \rtimes C_{q^{c-1}}
    \end{align*}
    where $i \in \{1,2\}$ and $1 \leq \mu \leq p-1$.
    
    We now wish to compute their orbits under $\Aut(G)$. Therefore, if $\phi\in\Aut(G)$, then, up to composition with conjugation under $P$, $\phi$ belongs to one of the following two families:

    Family 1:
    \[\begin{array}{ll}
        \phi_1(\e_1)=x_1\e_1,	&	\phi_1(\e_2)=y_1\e_2,\\
        \phi_1(A^{q^{e_0-(c-1)}})=A^{q^{e_0-(c-1)}}, &
    \end{array}\]
    for $1 \leq x_1,y_2 \leq p-1$.

    In the case that $(c,u)=(1,\frac{1}{2}(q-1))$, we additionally have family 2:
    \[\begin{array}{ll}
        \phi_2(\e_1)=x_2\e_2,	&	\phi_2(\e_2)=y_2\e_1.
    \end{array}\]
    We first note that applying $\phi_1$ allows us to assume that $\mu=1$ in the subgroups of type $H_2$. Thus, whenever $(c,u)\neq(1,\frac{1}{2}(q-1))$, we get three distinct orbits. In the special case that $(c,u)=(1,\frac{1}{2}(q-1))$, we now note that the subgroups of type $H_1$ form a single orbit, giving two distinct orbits in total.
\end{proof}
Propositions \ref{cyclic-conjugacy} and \ref{metab_conj} give the following theorem:
\begin{theorem}\label{parallel_HGS}
    Let $L/K$ be a separable extension of degree $pq$ admitting a Hopf--Galois structure of type $N$. Then all parallel extensions to $L/K$ also admit a Hopf--Galois structure of type $N$. 
\end{theorem}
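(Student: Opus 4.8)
The plan is to reduce the statement to the two classification Propositions \ref{cyclic-conjugacy} and \ref{metab_conj}, after first disposing of the degenerate case and rephrasing everything in Byott's holomorph language. First I would handle the Burnside case: if $p \not\equiv 1 \pmod{q}$, then $pq$ is a Burnside number and $C_{pq}$ is the only group of order $pq$, so by Lemma \ref{pq_Burnside} every degree $pq$ extension, in particular every parallel extension to $L/K$, admits a unique Hopf--Galois structure of cyclic type; here $N = C_{pq}$ and the conclusion is immediate. For the remainder I assume $p \equiv 1 \pmod{q}$, so that there are exactly two isomorphism types of $N$, namely $C_{pq}$ and $C_p \rtimes C_q$.

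Next I would translate the hypothesis. Writing $G := \Gal(E/K)$ and $G' := \Gal(E/L)$, the assumption that $L/K$ admits a Hopf--Galois structure of type $N$ means, by \cite{Byo96}, that $(G,G')$ is isomorphic as a permutation group to $(M,M')$ for some transitive $M \leq \Hol(N)$ with $M' = \mathrm{Stab}_M(1_N)$; fix such an isomorphism $\phi : G \to M$. A parallel extension $L'/K$ corresponds to an index $pq$ subgroup $H \leq G$, so $\phi(H)$ is an index $pq$ subgroup of $M$, and since $\phi$ carries $\mathrm{Core}_G(H)$ onto $C := \mathrm{Core}_M(\phi(H))$ we obtain $(G/\mathrm{Core}_G(H),\, H/\mathrm{Core}_G(H)) \cong (M/C,\, \phi(H)/C)$ as permutation groups. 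By the discussion of Section \ref{intro} it therefore suffices to show, for every index $pq$ subgroup of $M$, that $(M/C,\, \phi(H)/C)$ is permutation-isomorphic to a transitive subgroup of $\Hol(N)$.

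Now I would invoke the propositions one type at a time. If $N \cong C_{pq}$, then $M$ is one of the subgroups of Table \ref{cyclic-trans-subgroups}, and the proof of Proposition \ref{cyclic-conjugacy} exhibits, for each index $pq$ subgroup, an explicit permutation isomorphism from $(M/C,\, \phi(H)/C)$ onto a transitive subgroup of $\Hol(C_{pq})$. If $N \cong C_p \rtimes C_q$ and $p^2 \mid |M|$, then $M$ appears in Table \ref{metacyclic-trans-subs} and the proof of Proposition \ref{metab_conj} supplies in each case an explicit permutation isomorphism from $(M/C,\, \phi(H)/C)$ onto a transitive subgroup of $\Hol(C_p \rtimes C_q)$. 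Since $C_{pq}$ is the unique cyclic and $C_p \rtimes C_q$ the unique non-abelian group of order $pq$, the phrases ``cyclic type'' and ``non-abelian type'' occurring in these propositions coincide with ``type $N$'', so the two cases together prove the theorem.

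The one point requiring care, which I expect to be the main obstacle, is a non-abelian $N$ paired with a transitive $M \leq \Hol(C_p \rtimes C_q)$ whose order is \emph{not} divisible by $p^2$, since these are absent from Table \ref{metacyclic-trans-subs}. Here I would use Theorem 3.22 of \cite{Dar24(a)}: such an $M$ is permutation-isomorphic to a transitive subgroup of $\Hol(C_{pq})$, so Proposition \ref{cyclic-conjugacy} already realises each $(M/C,\, \phi(H)/C)$ as a transitive subgroup of a holomorph. To recover the non-abelian type I would then appeal to the observation recorded in the proof of Lemma \ref{triv_core} (from \cite{Dar24(a)}), that for these degree $pq$ holomorph data permutation isomorphism is equivalent to abstract isomorphism, together with the fact that the transitive subgroups of order not divisible by $p^2$ match up across $\Hol(C_{pq})$ and $\Hol(C_p \rtimes C_q)$. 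Confirming that this correspondence is type-preserving at the level of the quotient pairs $(M/C,\, \phi(H)/C)$ is the delicate step; everything else is a direct assembly of the two propositions.
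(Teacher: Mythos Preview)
Your proposal is correct and follows essentially the same route as the paper: invoke Proposition~\ref{cyclic-conjugacy} for $N\cong C_{pq}$, Proposition~\ref{metab_conj} for $N\cong C_p\rtimes C_q$ with $p^2\mid|M|$, and for the remaining non-abelian case use Theorem~3.22 of \cite{Dar24(a)} to pass to $\Hol(C_{pq})$, apply Proposition~\ref{cyclic-conjugacy}, and then use 3.22 once more to pull the resulting Hopf--Galois structure back to non-abelian type. Your ``delicate step'' is exactly what the paper handles by that second invocation of 3.22, so no extra argument via Lemma~\ref{triv_core} is needed; note also that the divisibility condition is indeed $p^2$ (as you wrote), matching Table~\ref{metacyclic-trans-subs}.
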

\begin{proof}
    Proposition \ref{cyclic-conjugacy} gives the statement for $N=C_{pq}$ and Proposition \ref{metab_conj} gives the statement for $N=C_p \rtimes C_q$ for the transitive subgroups $G \leq \Hol(N)$ with $|G|$ divisible by $q^2$. Lemma 3.22 of \cite{Dar24(a)} tells us that every transitive subgroup $G \leq \Hol(C_p \rtimes C_q)$ with $q^2 \nmid |G|$ is isomorphic, as a permutation group, to a transitive subgroup of $\Hol(C_{pq})$. Thus given such a subgroup, we can then translate the discussion to viewing $G$ as a transitive subgroup of $\Hol(C_{pq})$. Proposition \ref{cyclic-conjugacy} then tells us that all index $pq$ subgroups of $G$ correspond to extensions admitting a Hopf--Galois structure of cyclic type. Then a further application of Lemma 3.22 of \cite{Dar24(a)} tells us that the associated extension also admits a Hopf--Galois structure of non-abelian type.
\end{proof}

\section{Parallel extensions admitting no Hopf--Galois structures}
We will say that a separable extension $L/K$ admits the \textit{parallel no-HGS} property if $L/K$ admits a Hopf--Galois structure, and has a parallel extension $L'/K$ admitting no Hopf--Galois structure of any type. Theorem \ref{parallel_HGS} tells us that no separable extension of degree $pq$ admits the parallel no-HGS property. Thus the question remains as to whether there are indeed examples of extensions admitting the parallel no-HGS property, and if so, how rare such a phenomenon is.

In order to investigate this question for extensions $L/K$ of a fixed degree $n$, it is necessary to know a lot about the transitive subgroups of $\Hol(N)$ for each abstract group $N$ of order $n$. More specifically, if $L/K$ has Galois closure $E/K$ of degree $mn$, answering the question requires a classification of all transitive subgroups of $\Hol(N)$ (for each $N$ of order $n$) of order divisible by $n$ and dividing $mn$.

Our approach to searching for extensions of low degree $n$ admitting the parallel no-HGS property has been to write a computer algorithm, see \cite{Dar24(b)}, which can be implemented in MAGMA, \cite{BCP97}. The algorithm first builds a list of all transitive subgroups $G$ of $\Hol(N)$ for each group $N$ of order $n$. For each $G$, it then computes all subgroups $H$ of order $|G|/n$, followed by computing the core $C:=\mathrm{Core}_G(H)$. We then consider the natural homomorphism $\pi:G \to J:=G/C$ and view $H/C$ as the image $J'$ of $H$ under $\pi$ (this makes sure Magma understands that $H/C$ is a genuine subgroup of $G/C$). We then ask whether $J$ is isomorphic as an abstract group to any transitive subgroup $M$ of some $\Hol(N')$ (for some group $N'$ of order $n$) in our original list. If such an isomorphism $\phi$ exists, we can construct all isomorphisms from $J$ to $M$ by composing $\phi$ with elements of $\Aut(M)$. We finally check whether any of these isomorphisms send $J'$ to $M':=\mathrm{Stab}_M(1_{N'})$. If no isomorphism exists, then given a separable extension $L/K$ with Galois closure $E$ such that $\Gal(E/K) \overset{\phi}{\cong} G$ with $\phi(\Gal(E/L))= G':=\mathrm{Stab}_G(1_N)$, we have that $L/K$ admits the parallel no-HGS property. If such a transitive subgroup $G$ is found, then we also say that $G$ admits the parallel no-HGS property.

The algorithm described has enabled us to find examples of transitive subgroups $(G,G')$ of degrees $n=8,12,24$ and $27$ which admit the parallel no-HGS property.

It is to be remarked, however, that the occurrence of the parallel no-HGS property appears to be quite rare. To get a sense of this, we present Table \ref{par_no_HGS}, which demonstrates how rare the phenomenon is even within the families (characterised by the degree) of extensions within which it does occur. The first column gives the degree $n$ of the extensions in question, the second column gives the sum of the number of transitive subgroups of $\Hol(N)$ up to conjugacy for each abstract group $N$ of order $n$, and the final column gives the number of such subgroups admitting the parallel no-HGS property.
\begin{table}
    \centering
    \begin{tabular}{|c|c|c|}
    \hline
        Degree & \#Trans. sbgps & \#Parallel no-HGS\\
        \hhline{|=|=|=|}
        8  & 148 & 8\\
        \hline
        12 & 134 & 23\\
        \hline
        24 & 4752 & 396 \\
        \hline
        27 & 739 & 163 \\
        \hline
    \end{tabular}
    \caption{Number of permutation groups admitting the parallel no-HGS property}
    \label{par_no_HGS}
\end{table}
We present a degree 8 example of the occurrence of the parallel no-HGS property in the form of a theorem:
\begin{theorem}\label{no_HGS}
    There is a separable extension $L/K$ of degree $8$ admitting the parallel no-HGS property.
    
    Further, all parallel extensions to any extension $F/K$ of degree $8$ admitting the parallel no-HGS property have the same Galois closure as $F/K$.
\end{theorem}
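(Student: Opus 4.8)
The plan is to treat the two assertions in turn. For the first, I would exhibit a single explicit witness. Fixing a group $N$ of order $8$, I would use the algorithm of \cite{Dar24(b)} to locate a transitive subgroup $G \leq \Hol(N)$ possessing an index $8$ subgroup $H$ for which, writing $C := \mathrm{Core}_G(H)$, the pair $(G/C, H/C)$ fails to be permutation-isomorphic to $(M,M')$ for every transitive subgroup $M$ of every $\Hol(N')$ with $|N'| = 8$. Since there are only five groups $N'$ of order $8$, and each $\Hol(N')$ has finitely many transitive subgroups, this is a finite verification. By the dictionary recalled in Section \ref{intro}, such a $G$ (together with $G' = \mathrm{Stab}_G(1_N)$, whose core is trivial since $E$ is the Galois closure) corresponds to a separable degree $8$ extension $L/K$ admitting a Hopf--Galois structure of type $N$, while the parallel extension attached to $H$ admits none. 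The substance here is the concrete presentation of $G$ and $H$ by generators inside $\Hol(N)$ together with the non-realizability check.

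For the second assertion, I would first reduce it to a statement about cores. A parallel extension $L'/K$ attached to an index $8$ subgroup $H$ of $G$ has Galois closure $E^{C}$ with $C = \mathrm{Core}_G(H)$, so it shares the Galois closure of $F/K$ precisely when $C = \{1\}$. Thus the claim becomes: if $G$ admits the parallel no-HGS property, then \emph{every} index $8$ subgroup of $G$ has trivial core.

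The structural heart of the argument is the following elementary observation. Suppose the Sylow $2$-subgroup of $G$ has order exactly $8$, so that $|G| = 8m$ with $m$ odd and every index $8$ subgroup $H$ has odd order $m$. Then $\mathrm{Core}_G(H)$ is normal in $G$ and contained in $H$, hence of odd order, so $\mathrm{Core}_G(H) \leq O_{2'}(G)$. Consequently, if in addition $O_{2'}(G) = \{1\}$, then every index $8$ subgroup of $G$ automatically has trivial core, which is exactly what is wanted. So it suffices to prove that each $G$ admitting the parallel no-HGS property satisfies $|G|_2 = 8$ and $O_{2'}(G) = \{1\}$; in the cleanest instances this manifests as $G$ having a normal elementary abelian Sylow $2$-subgroup of order $8$ as its unique minimal normal subgroup, with odd-order quotient.

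The main obstacle is establishing exactly this structural dichotomy: that the non-realizability phenomenon in degree $8$ can occur only inside groups of this shape. I would approach it by ruling out the alternative, namely showing that if the Sylow $2$-subgroup of $G$ has order greater than $8$ (so the relevant point stabilisers have even order and $G$ is, up to its odd part, a genuine $2$-group action of degree $8$), then every transitive action of $G$ on $8$ points, and of each of its quotients arising as some $(G/C, H/C)$, is realizable, so that such a $G$ cannot witness the property. Combined with the finiteness of the list of transitive subgroups of the five order-$8$ holomorphs produced by the algorithm, this reduces the matter to the finitely many candidate groups, for each of which one confirms $|G|_2 = 8$ and $O_{2'}(G) = \{1\}$ directly. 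The delicate point I expect to absorb most of the effort is the realizability of sufficiently $2$-heavy transitive degree $8$ groups, which must be argued from the explicit structure of $2$-group actions of degree $8$ (every such action admitting a regular subgroup normalised by the acting group) rather than by a soft counting argument.
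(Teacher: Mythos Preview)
Your plan for the first assertion is exactly what the paper does: it exhibits an explicit transitive subgroup $G\leq\Hol(C_2\times C_4)$ of order $32$, writes down an index $8$ subgroup $H$ with trivial core, and verifies by exhaustive machine search that $(G,H)$ is not permutation-isomorphic to $(M,M')$ for any transitive $M$ in any $\Hol(N')$ with $|N'|=8$.

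For the second assertion there is a genuine gap. Your reduction reads the statement as: \emph{if $G$ admits the parallel no-HGS property then every index $8$ subgroup of $G$ has trivial core}. That reading is false, and it is falsified by the very witness the paper produces. The group $G$ in the paper's example has order $32$, hence is a $2$-group; any $2$-group of order $32$ has a normal subgroup of order $4$, i.e.\ an index $8$ subgroup equal to its own core. The corresponding parallel extension is Galois, has strictly smaller Galois closure, and (being Galois) certainly admits a Hopf--Galois structure. So the ``every index $8$ subgroup has trivial core'' formulation cannot be what is meant. The paper's proof makes clear that the intended content is the weaker assertion: for each of the finitely many transitive $(G,G')$ of degree $8$ that the algorithm flags as admitting the property, the particular index $8$ subgroups $H$ \emph{witnessing} the no-HGS phenomenon all happen to have trivial core. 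The proof of this is nothing more than inspecting the output of the algorithm over all $148$ transitive subgroups.

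Your structural route is therefore aimed at the wrong target, and its key step is also directly contradicted by the example: you propose to show that any $G$ with the parallel no-HGS property must have Sylow $2$-subgroup of order exactly $8$, arguing that ``$2$-heavy'' transitive degree $8$ groups are always realizable. But the paper's $G$ has $|G|_2=32>8$, so that dichotomy does not hold. The clean $O_{2'}(G)=\{1\}$ mechanism you outline is appealing, but it simply does not describe the degree $8$ examples that actually occur.
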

\begin{proof}
    We give an example of a transitive subgroup $G \leq \Hol(C_2 \times C_4)$ with the parallel no-HGS property.

    Consider the following permutation representation of $\Hol(C_2 \times C_4)$:
    \begin{align*}
        \Hol(C_2 \times C_4) = \langle &(1, 5, 2, 6)(3, 7, 4, 8), (1, 3)(2, 4)(5, 7)(6, 8),\\
                            &(1, 2)(3, 4)(5, 6)(7, 8), (5, 7)(6, 8),(3, 4)(7, 8)\\
                            &(5, 6)(7, 8)\rangle.
    \end{align*}
    One may identify $C_2 \times C_4$ inside $\Hol(C_2 \times C_4)$ with the subgroup
    \begin{align*}
        C_2 \times C_4 \cong \langle    & (1, 5, 2, 6)(3, 7, 4, 8),\\
                                        & (1, 3)(2, 4)(5, 7)(6, 8)\rangle.
    \end{align*}
    Then the following group $G$ of order $32$, given by
    \begin{align*}
        G := \langle  &(1, 5, 4, 8)(2, 6, 3, 7), (1, 5, 2, 6)(3, 8, 4, 7),\\
	               &(1, 3)(2, 4)(5, 7)(6, 8),(1, 2)(3, 4)(5, 6)(7, 8), (5, 6)(7, 8) \rangle
                      &\cong (C_2 \times C_2) \cdot D_4 \text{ (non-split extension)}
    \end{align*}
    is a transitive subgroup of $\Hol(C_2 \times C_4)$, with $G':=\mathrm{Stab}_G(1)$ of order $4$ given by
    \[G' = \langle  (5, 6)(7, 8),(3, 4)(5, 7, 6, 8) \rangle.\]
    The group
    \[H=\langle (5, 6)(7, 8), (1, 3)(2, 4)(5, 8)(6, 7) \rangle \cong C_2 \times C_2\]
    is an index $8$ subgroup of $G$, with trivial core.

    Then it can be exhaustively checked by computer that, for any group $N$ of order $8$ and any transitive subgroup $M \leq \Hol(N)$, there is no isomorphism $\phi:G \to M$ such that $\phi(H)=\mathrm{Stab}_M(1_N)$.

    Therefore, a separable extension $L/K$ with Galois closure $E$ such that\\ $\Gal(E/K) \overset{\psi}{\cong} G$ with $\psi(\Gal(E/L))= G'$ admits the parallel no-HGS property. In particular, $L/K$ admits a Hopf--Galois structure of type $C_2 \times C_4$, but has a parallel extension which admits no Hopf--Galois structure of any type.

    Every such degree 8 example given by the algorithm gives an index $8$ subgroup with trivial core, hence all parallel extensions of interest have the same Galois closure as the original extension.
\end{proof}
We remark that we have not yet found an example of the phenomenon for separable extensions of squarefree degree. This observation, along with the discussion in Section \ref{par_pq} inspires the following conjecture.
\begin{conjecture}
    There is no separable extension of squarefree degree admitting the parallel no-HGS property.
\end{conjecture}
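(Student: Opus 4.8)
The plan is to recast the conjecture as a purely group-theoretic existence statement and then attack it through the Hall-decomposition machinery of Section \ref{par_prelim}. By Greither--Pareigis \cite{GP87} and Byott's reformulation \cite{Byo96}, the conjecture is equivalent to the following assertion: for every squarefree $n$, every group $N$ of order $n$, every transitive subgroup $G \leq \Hol(N)$, and every index $n$ subgroup $H \leq G$ with core $C := \mathrm{Core}_G(H)$, the permutation pair $(G/C, H/C)$ acting on the $n = [G/C : H/C]$ points of $(G/C)/(H/C)$ is isomorphic as a permutation group to some transitive subgroup $(M,M')$ of $\Hol(N')$ for some group $N'$ of order $n$. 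The hypothesis that $L/K$ itself admits a Hopf--Galois structure is exactly what places $G$ transitively in $\Hol(N)$, so the content is to show that \emph{every} parallel extension of squarefree degree admits at least one Hopf--Galois structure. Crucially, since $\Hol(N)$ is solvable whenever $|N|$ is squarefree, every quotient $G/C$ is solvable, so one is only ever asking this question for extensions whose Galois closure has solvable group, which is the regime in which Hall's Theorem applies.

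First I would push the entire problem into the $\pi(n)$-part of the data. By Lemma \ref{unique_Hall} write $G = U \rtimes V$ with $U = G \cap Q$ the unique Hall $\pi(n)$-subgroup and $V$ a Hall $\pi'(n)$-subgroup, and by Corollary \ref{index_n_subgroups} write $H = U' \rtimes V'$ with $U'$ of index $n$ in $U$ and $V'$ conjugate to $V$. The final corollary of Section \ref{par_prelim} then gives $C = \mathrm{Core}_G(H) = \mathrm{Core}_G(H \cap U) \leq U$, so $C$ is a $\pi(n)$-group. Consequently $G/C \cong (U/C) \rtimes V$ and $H/C \cong (U'/C) \rtimes V'$, and the problem reduces to realising this core-reduced pair inside a holomorph, with the $\pi'(n)$-part $V$ carried along essentially unchanged to supply the ``automorphism'' coordinate of the target $\Hol(N')$ --- precisely the role played by the factor $X'$ in the proofs of Propositions \ref{cyclic-conjugacy} and \ref{metab_conj}.

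The heart of the argument, and the step I expect to be the real obstacle, is to produce the target group $N'$ together with an explicit permutation isomorphism $(G/C, H/C) \to (M,M')$ for some transitive $M \leq \Hol(N')$. For $n = pq$ this is carried out by hand in Section \ref{par_pq}: there are only two isomorphism types of $N$, the transitive subgroups are listed in Tables \ref{cyclic-trans-subgroups} and \ref{metacyclic-trans-subs}, every core $C$ is computed explicitly, and in each case $N' \in \{C_{pq}, C_p \rtimes C_q\}$ is chosen with a concrete isomorphism. To run the same strategy for general squarefree $n$ one would need a uniform classification of the transitive subgroups of $\Hol(N')$ as $N'$ ranges over all metacyclic groups $\langle \sigma, \tau \mid \sigma^{e'} = \tau^{d'} = 1, \tau\sigma = \sigma^{k'}\tau\rangle$ of order $n$, using the descriptions of $\Aut(N')$ and $\Hol(N')$ from \cite{AB18} and \cite{AB20}, together with a uniform recipe that, given the reduced pair inside $\Hol(N)$, selects the correct $N'$ and writes down the matching isomorphism. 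The difficulty is that the number of metacyclic types and the combinatorics of their holomorphs grow rapidly with the number of prime factors of $n$, and there is no evident closed form for the cores or for the target stabilisers $M'$ in general.

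The most promising route past this obstacle is induction on the number of prime factors of $n$. Using solvability one peels off a single prime $p \mid n$ by passing to the normal (hence unique) $p$-part inside $Q$, reduces modulo it to land in a holomorph of a group of order $n/p$ where the inductive hypothesis supplies a Hopf--Galois structure, and then attempts to re-assemble a regular subgroup of $\mathrm{Perm}((G/C)/(H/C))$ normalised by the image of $G/C$. The delicate point --- and precisely what keeps the statement a conjecture --- is controlling how the core $C$ interacts with this peeling and guaranteeing that the reassembled subgroup remains both regular and normalised after lifting back. Theorem \ref{parallel_HGS} shows that the reassembly succeeds unconditionally when $n = pq$, but a general obstruction-free merging argument across all prime factors is exactly what is missing.
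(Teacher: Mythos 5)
You have not proved this statement, and neither does the paper: it appears there as a \emph{conjecture}, offered without proof and motivated only by the degree-$pq$ analysis (Theorem \ref{parallel_HGS}) and the absence of squarefree examples in the computational search. So there is no paper proof to match, and your proposal --- which you yourself describe as a plan with a missing step --- should be judged as a reduction framework. As such, its sound parts do track the paper's machinery faithfully: the group-theoretic reformulation via Greither--Pareigis and Byott is the correct one; Lemma \ref{unique_Hall} and Corollary \ref{index_n_subgroups} give $G = U \rtimes V$ and $H = U' \rtimes V'$ exactly as you use them; and the final corollary of Section \ref{par_prelim} does show $C = \mathrm{Core}_G(H \cap U)$ is a $\pi(n)$-group, so that $G/C \cong (U/C) \rtimes V$ and the $\pi'(n)$-part rides along as in the proofs of Propositions \ref{cyclic-conjugacy} and \ref{metab_conj}.

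The genuine gap is the inductive step, and it is worse than you indicate in two concrete ways. First, ``peeling off a single prime $p \mid n$ by passing to the normal (hence unique) $p$-part inside $Q$'' is not available for an arbitrary $p$: the Sylow $p$-subgroup of $U$ (or of $Q$) need not be normal in $G$ --- already for $n = pq$ with $p \equiv 1 \pmod q$ the Sylow $q$-subgroup of $Q$ is not normal. Solvability guarantees only \emph{some} minimal normal subgroup of $G$, cyclic of prime order for squarefree degree, but with no control over which prime, so the peeling order cannot be chosen freely. Second, even granting a normal $C_p$, the quotient pair acting on $n/p$ points need not be isomorphic as a permutation group to a pair inside any $\Hol(Y)$ with $|Y| = n/p$, so the inductive hypothesis --- which by its very formulation applies only to pairs $(G,H)$ with $G$ already transitive in a holomorph --- cannot be invoked; and the reassembly step, lifting a regular group of order $n/p$ to a regular group of order $n$ normalised by the image of $G/C$, is precisely a Byott-style extension problem with real obstructions. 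That these obstructions are not vacuous is shown by the paper's own Proposition \ref{infinite}: there a new prime $q$ can be adjoined only as a direct factor, with $\gcd(q-1,n) \leq 2$ arranged via Lemma \ref{gcd} exactly to force $J = C_q \times Y$ and kill all interaction between the prime factors --- and that construction runs in the opposite direction, manufacturing counterexamples in non-squarefree-adjacent settings rather than structures. Your sketch is a reasonable research programme consistent with the paper's framework, but the conjecture remains open in both the paper and your attempt, and any writeup must present it as such.
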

Despite the rare occurrence of the phenomenon, given an extension admitting the parallel no-HGS property, we now give a way of constructing two infinite families of permutation groups admitting the property. At present, we can only do so for examples of odd degree, but it should be possible to extend this to examples of any degree. We start with the following lemmas:
\begin{lemma}\label{hom_proj}
    Let $N$ be a group and $M$ a characteristic subgroup of $N$. Then there is a group homomorphism $\phi:\Hol(N) \to \Hol(N/M)$ given by
    \[\phi(\eta,\alpha)=(\overline{\eta},\overline{\alpha})\]
    where $\overline{\eta}:=\eta M$ and $\overline{\alpha}(\overline{\eta}):=\alpha(\eta)M$.
\end{lemma}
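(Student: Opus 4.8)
The plan is to verify directly that the proposed map $\phi$ is a well-defined group homomorphism, working entirely within the semidirect product description $\Hol(N) \cong N \rtimes \Aut(N)$ that the paper has set up. The statement has two halves that must each be checked: first, that the formula $\phi(\eta,\alpha) = (\overline{\eta}, \overline{\alpha})$ actually lands in $\Hol(N/M)$, i.e.\ that $\overline{\alpha}$ is a well-defined automorphism of $N/M$; and second, that $\phi$ respects the multiplication rule $[\eta,\alpha][\mu,\beta] = [\eta\alpha(\mu), \alpha\beta]$ given in the preliminaries.

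First I would handle well-definedness. Since $M$ is characteristic in $N$, every $\alpha \in \Aut(N)$ satisfies $\alpha(M) = M$, so $\alpha$ descends to a map $\overline{\alpha} \colon N/M \to N/M$ defined by $\overline{\alpha}(\eta M) = \alpha(\eta)M$. I would check this is independent of the coset representative: if $\eta M = \eta' M$ then $\eta^{-1}\eta' \in M$, hence $\alpha(\eta^{-1}\eta') = \alpha(\eta)^{-1}\alpha(\eta') \in \alpha(M) = M$, so $\alpha(\eta)M = \alpha(\eta')M$. One then notes $\overline{\alpha}$ is a homomorphism because $\alpha$ is, it is bijective with inverse $\overline{\alpha^{-1}}$ (again using $\alpha^{-1}(M)=M$), and hence $\overline{\alpha} \in \Aut(N/M)$. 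Thus the pair $(\overline{\eta},\overline{\alpha})$ genuinely denotes an element of $\Hol(N/M) \cong (N/M) \rtimes \Aut(N/M)$.

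Next I would verify the homomorphism property by a direct computation using the multiplication law. Applying $\phi$ to the product gives
\[
\phi([\eta,\alpha][\mu,\beta]) = \phi([\eta\alpha(\mu),\alpha\beta]) = [\overline{\eta\alpha(\mu)}, \overline{\alpha\beta}],
\]
while the product of images is
\[
\phi([\eta,\alpha])\phi([\mu,\beta]) = [\overline{\eta},\overline{\alpha}][\overline{\mu},\overline{\beta}] = [\overline{\eta}\,\overline{\alpha}(\overline{\mu}), \overline{\alpha}\,\overline{\beta}].
\]
The two agree once I confirm the two component identities $\overline{\eta\alpha(\mu)} = \overline{\eta}\,\overline{\alpha}(\overline{\mu})$ and $\overline{\alpha\beta} = \overline{\alpha}\,\overline{\beta}$. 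The first follows because the projection $N \to N/M$ is a homomorphism and by the very definition of $\overline{\alpha}$, namely $\overline{\alpha}(\overline{\mu}) = \overline{\alpha(\mu)}$. The second, $\overline{\alpha\beta} = \overline{\alpha}\,\overline{\beta}$, says that $\alpha \mapsto \overline{\alpha}$ is itself a homomorphism $\Aut(N) \to \Aut(N/M)$, which one checks by evaluating both sides on an arbitrary coset $\overline{\eta}$.

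I do not expect any serious obstacle here; this is a routine verification and the only point requiring genuine care is the use of the characteristic hypothesis, which is exactly what guarantees $\alpha(M)=M$ for \emph{every} $\alpha$ and hence both that $\overline{\alpha}$ is well-defined and that it is invertible. Were $M$ merely normal rather than characteristic, individual automorphisms need not preserve $M$ and the map $\overline{\alpha}$ would fail to exist, so the hypothesis is essential and should be flagged at the point it is invoked.
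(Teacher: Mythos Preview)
Your proof is correct and follows the same approach as the paper. The paper's own proof is considerably terser: it verifies only that $\overline{\alpha}$ is well-defined (via $\alpha(\eta\mu)M = \alpha(\eta)\alpha(\mu)M = \alpha(\eta)M$, using that $M$ characteristic gives $\alpha(\mu)\in M$) and leaves the remaining checks---that $\overline{\alpha}$ is an automorphism and that $\phi$ is a homomorphism---as routine, whereas you spell these out in full.
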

We note that if $G\leq \Hol(N)$ is transitive on $N$, then $\phi(G)$ is transitive on $N/M$.
\begin{proof}
    We show that $\overline{\alpha}$ is well-defined. Take $\eta \in N, \mu \in M$, then
    \[\alpha(\eta\mu)M=\alpha(\eta)\alpha(\mu)M=\alpha(\eta)M.\]
    The second equality is due to the fact $M$ is characteristic in $N$, and so $\alpha(\mu) \in M$.
\end{proof}
\begin{lemma}\label{gcd}
    For any integer $n$, there are infinitely many primes $q$ such that $\gcd(q-1,n)\leq 2$.
\end{lemma}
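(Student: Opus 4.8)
The plan is to invoke Dirichlet's theorem on primes in arithmetic progressions, after arranging a single congruence class modulo a suitable modulus all of whose primes automatically satisfy $\gcd(q-1,n)\le 2$. Since $\gcd(q-1,n)=\gcd(q-1,|n|)$, I may assume $n\ge 1$ (the case $n=1$ being trivial). The key observation is that $\gcd(q-1,n)\le 2$ fails exactly when some odd prime dividing $n$ also divides $q-1$, or when $4$ divides both $n$ and $q-1$; I therefore want to rule out both possibilities simultaneously by a congruence condition on $q$.

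Concretely, let $p_1,\dots,p_k$ be the distinct odd primes dividing $n$ and set $P:=p_1\cdots p_k$ (with $P=1$ if there are none). Because each $p_i\ge 3$, the residues $0$ and $1$ do not exhaust $\mathbb{Z}/p_i\mathbb{Z}$, so I can pick $r_i$ with $r_i\not\equiv 0$ and $r_i\not\equiv 1\pmod{p_i}$. By the Chinese Remainder Theorem there is a residue $r$ modulo $M:=4P$ with $r\equiv 3\pmod 4$ and $r\equiv r_i\pmod{p_i}$ for each $i$. Since $\gcd(3,4)=1$ and each $r_i\not\equiv 0\pmod{p_i}$, we have $\gcd(r,M)=1$, so Dirichlet's theorem yields infinitely many primes $q\equiv r\pmod M$.

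It then remains to check that every such prime works. For each odd $p_i\mid n$ we have $q\equiv r_i\not\equiv 1\pmod{p_i}$, so $p_i\nmid q-1$; hence $\gcd(q-1,n)$ has no odd prime divisors. For the prime $2$, the condition $q\equiv 3\pmod 4$ forces $v_2(q-1)=1$, so $v_2(\gcd(q-1,n))=\min(v_2(q-1),v_2(n))\le 1$. Consequently $\gcd(q-1,n)$ divides $2$, that is $\gcd(q-1,n)\le 2$, and there are infinitely many such $q$.

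The argument is essentially routine once Dirichlet's theorem is in hand, so there is no serious obstacle; the only point requiring care is the prime $2$. One might be tempted to work only modulo the odd part $P$, but then primes $q\equiv 1\pmod 4$ with $4\mid n$ would give $\gcd(q-1,n)\ge 4$. Imposing $q\equiv 3\pmod 4$ (harmless, since it is compatible with coprimality to $M$) is exactly what caps the $2$-part of the gcd at $2$, and is the step I would single out as the one most easily overlooked.
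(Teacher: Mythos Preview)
Your proof is correct and follows essentially the same approach as the paper: both set up, via the Chinese Remainder Theorem, a residue class modulo $4P$ (with $P$ the product of odd primes dividing $n$) that forces $q\equiv 3\pmod 4$ and $q\not\equiv 1\pmod{p_i}$, then apply Dirichlet's theorem. The only cosmetic difference is that the paper makes the specific choice $r_i=2$ for each odd prime $p_i$, whereas you allow any $r_i\not\equiv 0,1$.
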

\begin{proof}
    By the Chinese Remainder Theorem, the system of congruences
    \begin{equation}\label{system}
        x-1 \equiv 2 \bmod{4}, \;\; x-1 \equiv 1 \bmod{p}, \;\; \text{ for all odd primes } p \mid n
    \end{equation}
    has infinitely many solutions. Define $\mathrm{rad}_{\mathrm{odd}}(n)$ to be the product of all distinct odd primes dividing $n$. Then given a solution $x$ of (\ref{system}), for any integer $k$, we have that
    \[x'=x+4k\mathrm{rad}_{\mathrm{odd}}(n)\]
    is also a solution of (\ref{system}). Next, given that $x$ and $4\mathrm{rad}_{\mathrm{odd}}(n)$ are coprime, Dirichlet's Theorem tells us that there are infinitely many primes $q$ of the form
    \[q=x+4k\mathrm{rad}_{\mathrm{odd}}(n).\]
    If $n$ is odd, then $q-1 \equiv x-1 \equiv 1 \bmod{p}$ for all primes $p \mid n$, and so $\gcd(q-1,n)=1$. If $n$ is even, then $q-1$ is coprime to the largest odd factor of $n$, and so $q-1 \equiv x-1 \equiv 2 \bmod{4}$ forces $\gcd(q-1,n)=2$.
\end{proof}
\begin{proposition}\label{infinite}
    Let $G$ be a transitive subgroup of $\Hol(N)$ for some group $N$ of order $n$ with $n$ odd such that $G$ admits the parallel no-HGS property. Then there are infinitely many primes $q$ such that $M \cong G \times C_q \leq \Hol(N \times C_q)$ is a transitive subgroup also admitting the parallel no-HGS property.
\end{proposition}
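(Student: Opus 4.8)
The plan is to show that $M \cong G \times C_q$ is transitive on $N \times C_q$, that its index-$(nq)$ subgroups are essentially controlled by those of $G$, and that the parallel no-HGS property is inherited. First I would verify transitivity: since $G$ is transitive on $N$ and $C_q$ is transitive on itself (being regular), the direct product $G \times C_q$ acts transitively on $N \times C_q$ in the natural way, and $\mathrm{Stab}_M(1_{N \times C_q}) = G' \times \{1\}$ where $G' = \mathrm{Stab}_G(1_N)$. This embeds $M$ into $\Hol(N \times C_q) \cong \Hol(N) \times \Hol(C_q)$ (using that $\gcd(|N|,q)=1$ makes $C_q$ characteristic in $N \times C_q$, so $\Aut(N \times C_q) \cong \Aut(N) \times \Aut(C_q)$).

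The crucial arithmetic input is Lemma \ref{gcd}: I choose $q$ from the infinitely many primes with $\gcd(q-1,n) \leq 2$, and since $n$ is odd this forces $\gcd(q-1,n)=1$. This means $q \nmid |\Aut(N)| = |\Hol(N)|/n$ and, more importantly, that $C_q$ is the unique Sylow $q$-subgroup appearing in the $C_q$-factor with no interference from $G$'s automorphisms. I would then use the homomorphism $\phi: \Hol(N \times C_q) \to \Hol(N)$ of Lemma \ref{hom_proj} (projecting away the characteristic subgroup $C_q$) to relate subgroups of $M$ to subgroups of $G$. The key structural claim is that any index-$(nq)$ subgroup $H^*$ of $M$ decomposes compatibly: because $q$ is coprime to $n$ and to $|\Aut(N)|$, the Sylow $q$-subgroup $C_q$ of $M$ is central and its image is forced, so $H^*$ must contain $C_q$ and project to an index-$n$ subgroup $H$ of $G$, giving $H^* = H \times C_q$ with $\mathrm{Core}_M(H^*) = \mathrm{Core}_G(H) \times C_q$.

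Given this decomposition, I would take the index-$n$ subgroup $H \leq G$ witnessing the parallel no-HGS property for $G$ (trivial core, with $(G,H)$ not isomorphic as a permutation pair to any transitive $(\tilde{M}, \tilde{M}')$ in any $\Hol(\tilde{N})$ for $|\tilde{N}|=n$) and set $H^* = H \times C_q$. I must show $(M, H^*)$ is likewise not permutation-isomorphic to any transitive pair in $\Hol(N^*)$ for any group $N^*$ of order $nq$. Here I would use that every group of order $nq$ with $\gcd(q-1,n)=1$ is a direct product $N^* \cong \tilde{N} \times C_q$ with $|\tilde{N}|=n$ (by Schur--Zassenhaus together with the coprimality forcing the extension to split trivially), so $\Hol(N^*) \cong \Hol(\tilde{N}) \times \Hol(C_q)$. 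Any transitive subgroup $\tilde{M} \leq \Hol(N^*)$ would then project to a transitive subgroup of $\Hol(\tilde{N})$, and an isomorphism $M \to \tilde{M}$ carrying $H^* \to \tilde{M}'$ would, after quotienting by the unique central $C_q$ on each side, descend to a permutation isomorphism $(G, H) \to (\tilde{N}\text{-transitive pair})$, contradicting that $G$ has the property.

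\emph{The main obstacle} I anticipate is the last step: controlling the classification of groups of order $nq$ and ensuring that the $C_q$-factor can always be canonically split off on the target side so that any putative isomorphism descends cleanly. The coprimality $\gcd(q-1,n)=1$ is exactly what guarantees $C_q$ is a direct central factor that automorphisms cannot mix with the order-$n$ part, and making this rigorous — in particular that $C_q = \mathrm{Core}_M(H^*)$ is forced and is sent to the corresponding central factor under any isomorphism — is where the real work lies; the transitivity and embedding steps are routine by comparison.
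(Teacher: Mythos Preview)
Your overall strategy matches the paper's, but there is a concrete index error and a missing hypothesis on $q$ that together break the argument.

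First, the index count: an index-$nq$ subgroup $H^*$ of $M=G\times C_q$ has order $|G\times C_q|/(nq)=|G|/n$, which is coprime to $q$ once $q$ is chosen large. Thus $H^*$ \emph{cannot} contain $C_q$, and your claimed decomposition $H^*=H\times C_q$ (which would have index $n$, not $nq$) is impossible. The correct witness, as in the paper, is $H^*=H\times\{1\}$ with $H$ the index-$n$ subgroup of $G$ coming from the parallel no-HGS property; its core is $\mathrm{Core}_G(H)\times\{1\}$, so the quotient pair is $(G/C\times C_q,\,H/C\times\{1\})$, not $(G,H)$ modulo a common $C_q$.

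Second, $\gcd(q-1,n)=1$ alone is not enough. It does \emph{not} imply $q\nmid|\Aut(N)|$, and more seriously it does not force the Sylow $q$-subgroup of an arbitrary group $N^*$ of order $nq$ to be normal, nor does it force the copy of $C_q$ inside a transitive $\tilde M\le\Hol(N^*)=\Hol(\tilde N)\times\Hol(C_q)$ to project trivially onto $\Hol(\tilde N)$. The paper fixes both issues by additionally taking $q>n$ (so the Sylow $q$-subgroup of $N^*$ is unique, giving $N^*\cong \tilde N\times C_q$) and $q>|\Aut(Y)|$ for every group $Y$ of order $n$ (so $q\nmid|\Hol(\tilde N)|$ and hence the $C_q$-factor dies under the projection $\phi:\Hol(N^*)\to\Hol(\tilde N)$ of Lemma~\ref{hom_proj}); simply demanding $q>n!$ secures all of this. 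With those extra conditions in place, one argues that $\phi(\tilde M)$ is transitive on $\tilde N$, that the $C_q$-factor of $G/C\times C_q$ lands in $\ker\phi$, and hence that $(G/C,H/C)$ itself embeds as a transitive pair in $\Hol(\tilde N)$ --- the desired contradiction. Your ``quotient by the central $C_q$ on each side'' picture is morally this, but without the size bound on $q$ you cannot guarantee the two copies of $C_q$ line up under the projection.
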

As demonstrated by Table \ref{par_no_HGS}, we have found examples of transitive subgroups of degree 27 admitting the parallel no-HGS property, and so it is reasonable to ask that $n$ is an odd integer.
    \begin{proof}
As $G$ admits the parallel no-HGS property, there is an index $n$ subgroup $H$ of $G$ with $C:=\mathrm{Core}_G(H)$ such that $(G/C,H/C)$ is not isomorphic as a permutation group to any transitive subgroup of $\Hol(N')$ for any group $N'$ of order $n$.

    By Lemma \ref{gcd}, there are infinitely many primes $q$ such that
    \begin{enumerate}[label=(\roman*)]
        \item $\gcd(q-1,n)=1$,
        \item $q>n$,
        \item $q>|\Aut(Y)|$ for every group $Y$ of order $n$.
    \end{enumerate}
    In particular, if we ask that $q>n!$, we get both (ii) and (iii). So, for the rest of this proof, we let $q>n!$ be a prime satisfying $\gcd(q-1,n)=1$.

    We note that $G \times C_q$ is a transitive subgroup of $\Hol(N \times C_q)$ with $\mathrm{Stab}_{G \times C_q}(1_N)=G' \times \{1\}$, where $G':=\mathrm{Stab}_G(1_N)$. One index $qn$ subgroup of $G \times C_q$ is $H \times \{1\}$, which has $\mathrm{Core}_{G \times C_q}(H \times \{1\})=C \times \{1\}$. We then have $(G \times C_q)/(C \times \{1\}) \cong G/C \times C_q$ and $(H \times \{1\})/(C \times \{1\}) \cong H/C \times \{1\}$. We will show that $(G/C \times C_q,H/C \times \{1\})$ is not isomorphic as a permutation group to any transitive subgroup $I \leq \Hol(J)$ for any group $J$ of order $qn$. We will aim to get contradiction, assuming that we have such a situation.
    
    By (ii), $J$ has a unique Sylow $q$-subgroup, which is therefore characteristic. Then by the Schur-Zassenhaus theorem, $J$ must be a split extension of $C_q$. In particular, we have that $J=C_q \rtimes Y$ for some group $Y$ of order $n$. Finally, by (i), the action of $Y$ on $C_q$ is trivial, and hence we can write $J=C_q \times Y$. We note that $Y$ is also characteristic in $J$, and hence $\Hol(J)=\Hol(C_q) \times \Hol(Y)$.

    Now, suppose we have a transitive subgroup $I \leq \Hol(J)$ with $(I,I')$ isomorphic to $(G/C \times C_q,H/C \times \{1\})$, where $I':=\mathrm{Stab}_I(1_J)$. By Lemma \ref{hom_proj}, we have a homomorphism $\phi: \Hol(J) \to \Hol(Y)$, and that $\phi(I)$ is a transitive subgroup of $\Hol(Y)$. By (ii) and (iii), the factor $C_q$ must have trivial image in $\phi$, and so $G/C$ must act transitively (but a priori not necessarily faithfully) on $Y$ with stabiliser $H/C$. The kernel of this action is then the core of $H/C$ in $G/C$, which is trivial since $C=\mathrm{Core}_G(H)$. Therefore $(G/C,H/C)$ embeds as a transitive subgroup of $\Hol(Y)$, giving a contradiction.
\end{proof}
 \begin{corollary}\label{infty_cor}
    Let $G$ be a transitive subgroup of $\Hol(N)$ for some group $N$ of odd order $n$ such that $G$ admits the parallel no-HGS property. Then for any positive integer $k$, there are infinitely many sequences $p_1,\cdots,p_k$ of length $k$ of primes coprime to $n$ such that $M \cong G \times C_m \leq \Hol(N \times C_m)$ is a transitive subgroup also admitting the parallel no-HGS property, where $m=p_1\cdots p_k$.
 \end{corollary}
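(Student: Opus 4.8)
The plan is to prove the statement by induction on $k$, using Proposition \ref{infinite} as both the base case and the engine of the inductive step; the corollary is essentially a careful repackaging of that proposition, iterated. For the base case $k=1$, Proposition \ref{infinite} directly produces infinitely many primes $q=p_1$ with $G \times C_q \leq \Hol(N \times C_q)$ transitive and admitting the parallel no-HGS property. Since the primes furnished in the proof of that proposition satisfy $q > n!$, each such $q$ is odd, exceeds $n$, and is therefore coprime to $n$, exactly as required by the statement.

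For the inductive step, I would assume the claim for $k-1$, so that there are distinct primes $p_1,\dots,p_{k-1}$, all coprime to $n$, with $m' := p_1\cdots p_{k-1}$ and $G \times C_{m'} \leq \Hol(N \times C_{m'})$ a transitive subgroup admitting the parallel no-HGS property. The key observation is that $N \times C_{m'}$ again has \emph{odd} order $nm'$: the integer $n$ is odd by hypothesis, and each $p_i$ is odd since it was chosen larger than the relevant factorial. Hence Proposition \ref{infinite} applies verbatim with $N$ replaced by $N \times C_{m'}$ and $G$ replaced by $G \times C_{m'}$, yielding infinitely many primes $q$ satisfying $q > (nm')!$ and $\gcd(q-1,nm')=1$ such that $(G \times C_{m'}) \times C_q \leq \Hol\bigl((N \times C_{m'}) \times C_q\bigr)$ is transitive and admits the parallel no-HGS property.

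It then remains to recast this as a statement about $G \times C_m$. Setting $p_k := q$ and $m := m'q = p_1\cdots p_k$, the inequality $q > (nm')! > m'$ guarantees that $q$ is a new prime, distinct from $p_1,\dots,p_{k-1}$ and coprime to $nm'$; in particular $\gcd(m',q)=1$, so $C_{m'} \times C_q \cong C_m$. Under this isomorphism one has $(G \times C_{m'}) \times C_q \cong G \times C_m$ and $(N \times C_{m'}) \times C_q \cong N \times C_m$, whence $G \times C_m \leq \Hol(N \times C_m)$ is transitive and admits the parallel no-HGS property, completing the induction. Because the new prime may be chosen in infinitely many ways at each of the $k$ stages, there are infinitely many admissible sequences $p_1,\dots,p_k$.

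The main point requiring care — rather than a genuine obstacle — is the verification that the underlying group remains of odd order at every stage, since odd order is precisely the hypothesis that Proposition \ref{infinite} demands. This is automatic here, as oddness of $n$ is preserved under multiplication by the odd primes the proposition supplies; but it is worth emphasising that the argument would fail already at the first step if $n$ were even, which is exactly why the corollary is restricted to odd $n$.
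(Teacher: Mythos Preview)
Your proof is correct and is exactly the ``repeated application of Proposition \ref{infinite}'' that the paper invokes, just written out explicitly as an induction on $k$. The care you take in verifying that $N \times C_{m'}$ remains of odd order and that the new prime $q$ is coprime to $m'$ (so that $C_{m'} \times C_q \cong C_m$) fills in precisely the details the paper leaves implicit.
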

 \begin{proof}
     This follows from repeated application of Proposition \ref{infinite}.
 \end{proof}

\section*{Acknowledgements}
This paper was completed under the support of the following two grants:

The Engineering and Physical Sciences Doctoral Training Partnership research grant (EPSRC DTP).

Project OZR3762 of Vrije Universiteit
Brussel and FWO Senior Research Project G004124N.
\bibliography{MyBib}

\end{document}